%

\documentclass[aos,MSNbibl,seceqn,dvips]{arximspdf}
\usepackage{mathbh}
\usepackage{graphicx}

%

\doi{10.1214/14-AOS1224} 
\volume{42}
\issue{4}
\pubyear{2014}
\firstpage{1312}
\lastpage{1346}

\makeatletter
\newcommand{\rrVert}{\Vert}
\newcommand{\rrvert}{\vert}
\newcommand{\llVert}{\Vert}
\newcommand{\llvert}{\vert}
\def\cal{\mathcal}
\def\mathds{\mathbb}
\newcommand{\eqref}[1]{(\ref{#1})}
\newtheorem{thmm}{Theorem}[section]
\newproclaim{defi}[thmm]{Definition}
\newproclaim{remark}[thmm]{Remark}

\newtheorem{lem}[thmm]{Lemma}
\newtheorem{cor}[thmm]{Corollary}
\newproclaim{assump}[thmm]{Assumption}
\renewcommand{\:}{\mathrel{\mathop{:}}}
\newcommand{\E}{\mathbb{E}}
\newcommand{\N}{\mathbb{N}}
\newcommand{\R}{\mathbb{R}}
\newcommand{\eps}{\varepsilon}
\renewcommand{\epsilon}{\varepsilon}
\newcommand{\KLEINO}{\scriptstyle{\mathcal{O}}}
\newcommand{\1}{\mathbh{1}}
\newcommand{\AVAR}{\operatorname{AVAR}}
\newcommand{\Var}{\operatorname{\mathbb{V}ar}}
\newcommand{\Cov}{\operatorname{\mathbb{C}OV}}
\newcommand{\trace}{\operatorname{trace}}
\newcommand{\diag}{\operatorname{diag}}
\newcommand{\Id}{\mathrm{Id}}
\renewcommand{\phi}{\varphi}
\renewcommand{\theta}{\vartheta}
\renewcommand{\subset}{\subseteq}
\makeatother

\begin{document}
\begin{frontmatter}

\title{Estimating the quadratic covariation matrix from noisy
observations: Local method of moments and~efficiency\thanksref{T2}}
\runtitle{Efficient quadratic covariation matrix estimation}
\thankstext{T2}{Supported by the Deutsche
Forschungsgemeinschaft via SFB 649 \textit{\"Okonomisches Risiko}
and FOR 1735 \textit{Structural Inference in Statistics: Adaptation and
Efficiency}.}

\begin{aug}
\author[a]{\fnms{Markus}~\snm{Bibinger}\ead[label=e1]{bibinger@math.hu-berlin.de}},
\author[b]{\fnms{Nikolaus}~\snm{Hautsch}\thanksref{t3}\ead[label=e2]{nikolaus.hautsch@univie.ac.at}},
\author[c]{\fnms{Peter}~\snm{Malec}\ead[label=e3]{malecpet@hu-berlin.de}}
\and
\author[a]{\fnms{Markus}~\snm{Rei\ss}\corref{}\ead[label=e4]{mreiss@math.hu-berlin.de}}
\runauthor{Bibinger, Hautsch, Malec and Rei{\ss}}
\thankstext{t3}{Supported by the Wiener Wissenschafts-,
Forschungs- und Technologiefonds (WWTF).}
\affiliation{Humboldt-Universit\"at zu Berlin, University of Vienna,
Humboldt-Universit\"at zu Berlin and Humboldt-Universit\"at zu Berlin}
\address[a]{M. Bibinger\\
M. Rei{ss}\\
Institut f\"ur Mathematik\\
Humboldt-Universit\"at zu Berlin\\
Unter den Linden 6\\
10099 Berlin\\
Germany\\
\printead{e1}\\
\phantom{E-mail:\ }\printead*{e4}}
\address[b]{N. Hautsch\\
Department of Statistics\\
\quad and Operations Research\\
University of Vienna\\
Oskar-Morgenstern-Platz 1\\
1090 Vienna\\
Austria\\
\printead{e2}}
\address[c]{P. Malec\\
School of Business and Economics\\
Humboldt-Universit\"at zu Berlin\\
Spandauer Str. 1\\
10178 Berlin\\
Germany\\
\printead{e3}}
\end{aug}

\received{\smonth{11} \syear{2013}}
\revised{\smonth{4} \syear{2014}}

%
\begin{abstract}
An efficient estimator is constructed for the quadratic covariation or
integrated co-volatility matrix of a multivariate continuous martingale
based on noisy and nonsynchronous observations under high-frequency
asymptotics. Our approach relies on an asymptotically equivalent
continuous-time observation model where a local generalised method of
moments in the spectral domain turns out to be optimal. Asymptotic
semi-parametric efficiency is established in the Cram\'er--Rao sense.
Main findings are that nonsynchronicity of observation times has no
impact on the asymptotics and that major efficiency gains are possible
under correlation. Simulations illustrate the finite-sample behaviour.
\end{abstract}

%
\begin{keyword}[class=AMS]
\kwd[Primary ]{62M10}
\kwd[; secondary ]{62G05}
\end{keyword}
\begin{keyword}
\kwd{Asymptotic equivalence}
\kwd{asynchronous observations}
\kwd{integrated covolatility matrix}
\kwd{high-frequency data}
\kwd{semi-parametric efficiency}
\kwd{microstructure noise}
\end{keyword}
\end{frontmatter}

\section{Introduction}

We study the estimation of the quadratic covariation (or integrated
co-volatility) matrix of a multi-dimensional continuous
semi-martingale. Semi-martingales are central objects in stochastics
and the estimation of their quadratic covariation from noisy
observations is certainly a fundamental topic on its own. Because of
its key importance in finance, this question attracts high attention
from high-frequency financial statistics with implications for
portfolio allocation, risk quantification, hedging or asset pricing.
While the univariate case has been studied extensively from both angles
(see, e.g., the survey of Andersen {et al.}~\cite{andboldie2010} or
recent work by Rei{ss} \cite{reiss} and Jacod and Rosenbaum \cite
{jacodrosenbaum}), statistical inference for the quadratic covariation
matrix is not yet well understood. This is, on the one hand, due to a
richer geometry, for example, induced by noncommuting matrices,
generating new effects and calling for a deeper mathematical
understanding. On the other hand, statistical challenges arise by the
use of underlying multivariate high-frequency data which are typically
polluted by noise. Though they open up new ways for statistical
inference, their noise properties, significantly different sample sizes
(induced by different trading frequencies) as well as irregular and
asynchronous spacing in time make estimation in these models far from
obvious. Different approaches exist, partly furnish unexpected results,
but are rather linked to the method than to the statistical problem. In
this paper, we strive for a general understanding of the statistical
problem itself, in particular the question of efficiency, while at the
same time we develop a local method of moments approach which yields a
simple and efficient estimator.

To remain concise, we consider the basic statistical model where the
$d$-dimensional discrete-time process
{\renewcommand{\theequation}{$\mathcal{E}_0$}
\begin{equation}
\label{E0}
Y_i^{(l)}=X_{t_i^{(l)}}^{(l)}+
\eps_i^{(l)},\qquad 0\le i\le n_l,1\le l\le d,
\end{equation}}
\hspace*{-3pt}is observed with the $d$-dimensional continuous martingale
\[
X_t=X_0+\int_0^t
\Sigma^{1/2}(s) \,d{B}_s, \qquad t\in[0,1],
\]
in terms of a $d$-dimensional standard Brownian motion ${B}$ and the
squared (instantaneous or spot) co-volatility matrix
\[
\Sigma(t)= \bigl(\Sigma_{lr}(t) \bigr)_{1\le l,r\le d}\in\R
^{d\times d}.
\]
In financial applications, $X_t$ corresponds to the multi-dimensional
process of fundamental asset prices whose martingale property complies
with market efficiency and exclusion of arbitrage. The major quantity
of interest is the quadratic covariation matrix $\int_0^1 \Sigma(t)
\,dt$, computed over a normalised interval such as, for example, a
trading day.

The signal part $X$ is assumed to be independent of the observation
errors $(\mathbf{\eps}_i^{(l)}), 1\le l\le d,1\le i\le n_l$, which are
mutually independent and centered normal with variances $\eta_l^2$. In
the literature on financial high-frequency data, these errors capture
microstructure frictions in the market (\textit{microstructure noise}).
The observation times are given via quantile transformations as
$t_i^{(l)}=F_l^{-1}(i/n_l)$ for some distribution functions $F_l$.
While the model \eqref{E0} is certainly an idealisation of many real
data situations, its precise analysis delivers a profound understanding
and thus serves as a basis for developing procedures in more complex
models. During the revision of this paper, Altmeyer and Bibinger \cite
{stable} have shown that the local method of moments in a general
continuous semi-martingale model (including drift and stochastic
volatility) and under general moment conditions on the noise $(\eps
_i^{(l)})$ enjoys similar asymptotic properties as in our basic model.
In particular, a stable central limit theorem is established. A similar
extension to random and endogenous observations times $(t_i^{(l)})$
would be of high interest, but does not seem obvious; see Li {et
al.} \cite{Lietal} for recent work on the case without noise and some
empirical evidence for endogenous times.

Estimation of the quadratic covariation of a price process is a core
research topic in current financial econometrics and various approaches
have been put forward in the literature. The realised covariance
estimator was studied by Barndorff-Nielsen and Shephard \cite{bns04}
for a setting that neglects both microstructure noise and effects due
to the nonsyncronicity of observations. Hayashi and Yoshida \cite{hy}
propose an estimator which is efficient under the presence of
asynchronicity, but without noise. Methods accounting for both types of
frictions are the quasi-maximum-likelihood approach by A\"{i}t-Sahalia
{et al.} \cite{aitfanxiu2010}, realised kernels by
Barndorff-Nielsen {et al.} \cite{bn2011}, pre-averaging by
Christensen {et al.} \cite{kinnepoldivet}, the two-scale estimator
by Zhang~\cite{zhang11} and the local spectral estimator by Bibinger
and Rei{ss} \cite{bibingerreiss}. In contrast to the univariate case,
the asymptotic properties of these estimators are involved and the
structure of the terms in the asymptotic variance deviate
significantly. None of the methods outperforms the others for all
settings, calling for a lower efficiency bound as a benchmark.

In this paper, we propose a local method of moments (LMM) estimator,
which is optimal in a semi-parametric Cram\'er--Rao sense under the
presence of noise and the nonsynchronicity of observations. The idea
rests on the (strong) asymptotic equivalence in Le Cam's sense of model
\eqref{E0} with the continuous time signal-in-white-noise model
{\renewcommand{\theequation}{$\mathcal{E}_1$}
\begin{equation}
\label{E1}
\,d Y_t=X_t \,dt+\diag
\bigl(H_{n,l}(t)\bigr)_{1\le l\le d} \,d{W}_t,\qquad t\in[0,1],
\end{equation}}
\hspace*{-3pt}where ${W}$ is a standard $d$-dimensional Brownian motion independent
of ${B}$ and the component-wise local noise level is
\setcounter{equation}{0}
\begin{equation}
\label{noiselevel}H_{n,l}(t):=\eta _l\bigl(n_lF_l'(t)
\bigr)^{-{1}/{2}}.
\end{equation}
Here, $F_l'(t)$ represents the local frequency of occurrences
(``observation density'') and thus $n_lF_l'(t)$ corresponds to the
local sample size, which is the continuous-time analogue of the so
called \textit{quadratic variation of time}, discussed in the literature.
The advantage of the continuous-time model \eqref{E1} is particularly
distinctive in the multivariate setting where asynchronicity and
different sample sizes in the discrete data \eqref{E0} blur the
fundamental statistical structure. If two sequences of statistical
experiments are asymptotically equivalent, then any statistical
procedure in one experiment has a counterpart in the other experiment
with the same asymptotic properties; see Le Cam and Yang \cite
{lecamyang} for details. Our equivalence proof is constructive such
that the procedure we shall develop for \eqref{E1} has a concrete
equivalent in \eqref{E0} with the same asymptotic properties.

A remarkable theoretical consequence of the equivalence between \eqref
{E0} and \eqref{E1} is that under noise, the asynchronicity of the data
does not affect the asymptotically efficient procedures. In fact, in
model \eqref{E1}, the distribution functions $F_l$ only generate
time-varying local noise levels $H_{n,l}(t)$, but the shift between
observation times of the different processes does not matter. Hence,
locally varying observation frequencies have the same effect as locally
varying variances of observation errors and may be pooled. This is in
sharp contrast to the noiseless setting where the variance of the
Hayashi--Yoshida estimator \cite{hy} suffers from errors due to
asynchronicity, which carries over to the pre-averaged version by
Christensen {et al.} \cite{kinnepoldivet} designed for the noisy
case. Only if the noise level is assumed to tend to zero so fast that
the noiseless case is asymptotically dominant, then the
nonsynchronicity may induce additional errors.

Our proposed estimator builds on a locally constant approximation of
the continuous-time model \eqref{E1} with equi-distant blocks across
all dimensions. We show that the errors induced by this approximation
vanish asymptotically. Empirical local Fourier coefficients allow for a
simple moment estimator for the block-wise spot co-volatility matrix.
The final estimator then corresponds to a generalised method of moments
estimator of $\int_0^1 \Sigma(t) \,dt$, computed as a weighted sum of all
individual local estimators (across spectral frequencies and time).
Asymptotic efficiency of the resulting LMM estimator is shown to be
achieved by an optimal weighting scheme based on the Fisher information
matrices of the underlying local moment estimators.

As a result of the noncommutativity of the Fisher information
matrices, the LMM estimator for one element of the covariation matrix
generally depends on \textit{all} entries of the underlying local
covariances. Consequently, the volatility estimator in one dimension
substantially gains in efficiency when using data of all other
potentially correlated processes. These efficiency gains in the
multi-dimensional setup constitute a fundamental difference to the case
of i.i.d. observations of a Gaussian vector where the empirical
variance of one component is an efficient estimator. Here, using the
other entries cannot improve the variance estimator unless the
correlation is known; cf. the classical Example 6.6.4 in Lehmann and
Casella \cite{LehmannCasella}.
This finding is natural for covariance estimation under nonhomogeneous
noise and because of its general interest we shall discuss a related
i.i.d. example in Section~\ref{sec:1b}. The possibility of efficiency
gainshas been known in specific cases for quite a while, which was then
also discussed in Shephard and Xiu \cite{xiu} and Liu and Tang \cite
{qmle}, but until now a general view and a precise lower bound were missing.

The next Section~\ref{sec:1b} gives an overview of the estimation
methodology and explains the major implications in a compact and
intuitive way with the subsequent sections establishing the general
results in full rigour. Emphasis is put on the concrete form of the
efficient asymptotic variance-covariance structure which provides a
rich geometry and has surprising consequences in practice.

In Section~\ref{sec:2}, we establish the asymptotic equivalence in Le
Cam's sense of models \eqref{E0} and \eqref{E1} in Theorem~\ref{theo1}.
The regularity assumptions required for $\Sigma$ are less restrictive
than in Rei{ss} \cite{reiss} and particularly allow $\Sigma$ to jump.

Section~\ref{sec:3} introduces the LMM estimator in the spectral
domain. Theorem~\ref{cltorlle} provides a multivariate central limit
theorem (CLT) for an oracle LMM estimator, using the unknown optimal
weights and an information-type matrix for normalisation, which allows
for asymptotically diverging sample sizes in the coordinates.
Specifying to sample sizes of the same order $n$, Corollary~\ref
{corclt} yields a CLT with rate $n^{1/4}$ and a covariance structure
between matrix entries, which is explicitly given by concise matrix
algebra. Then pre-estimated weight matrices generate a fully adaptive
version of the LMM-estimator, which by Theorem~\ref{cltadlle} shares
the same asymptotic properties as the oracle estimator. This allows
intrinsically feasible confidence sets without pre-estimating
asymptotic quantities.

In Section~\ref{sec:4}, we show that the asymptotic covariance matrix
of the LMM estimator attains a lower bound in the Cram\'er--Rao sense.
This lower bound is achieved by a combination of space--time
transformations and advanced calculus for covariance operators.
Detailed proofs are given in the supplementary file \cite{supplement}.

Finally, the discretisation and implementation of the estimator for
model \eqref{E0} is briefly described in Section~\ref{sec:5} and
presented together with some numerical results. We apply the method for
a complex and realistic simulation scenario, obtained by a
superposition of time-varying seasonality functions, calibrated to real
data, and a semi-martingale process with stochastic volatilities
exhibiting leverage effects. The observation times are asynchronous and
random. We conclude that the finite sample behaviour of the LMM
estimators is well predicted by the asymptotic theory (even in cases
where a formal proof lacks). Some comparison with competing procedures
is provided.


\section{Principles and major implications}
\label{sec:1b}

\subsection{Spectral LMM methodology}

The time interval $[0,1]$ is partitioned into small blocks
$[kh,(k+1)h)$, $k=0,\ldots,h^{-1}-1$, such that on each block a
constant parametric co-volatility matrix estimate can be sought for (cf.
the \textit{local-likelihood} approach).
The main estimation idea is then to use block-wise spectral statistics
$(S_{jk})$, which represent localised Fourier coefficients as in Reiss
{} \cite{reiss}. Specifying to the original discrete data \eqref{E0},
they are calculated as
%
\begin{equation}
\label{specdis} S_{jk}=\pi jh^{-1} \Biggl(\sum
_{\nu=1}^{n_l} ( Y_{\nu}-Y_{\nu
-1} )
\Phi_{jk} \biggl(\frac{t_{\nu-1}^{(l)}+t_{\nu}^{(l)}}{2} \biggr) \Biggr)_{1\le l\le
d}\in
\R^d,
\end{equation}
with sine functions $\Phi_{jk}$ of frequency index $j$ on each block
$[kh,(k+1)h]$ given by
%
\begin{equation}
\label{EqPhijk} \Phi_{jk}(t)=\frac{\sqrt{2h}}{j\pi} \sin{ \bigl(j\pi
h^{-1} (t-kh ) \bigr)}\1_{[kh,(k+1)h]}(t),\qquad j\ge1.
\end{equation}
The same blocks are used across all dimensions $d$ with their size $h$
being determined by the least frequently observed process.

The statistics $(S_{jk})$ are Riemann--Stieltjes sum approximations to
Fourier integrals based on a possibly nonequidistant grid. The
discrete-time processes $(Y_i^{(l)})$ can be transformed into a
continuous-time process via linear interpolation in each dimension,
which yields piecewise constant (weak) derivatives, with the $S_{jk}$
being interpreted as integrals over these derivatives. Mathematically,
the asymptotic equivalence of \eqref{E0} and \eqref{E1} based on this
linear interpolation is made rigorous in Theorem~\ref{theo1}. The
required regularity condition is that $\Sigma(t)$ is the sum of an
$L^2$-Sobolev function of regularity $\beta$ and an $L^2$-martingale
and the size of $\beta$ accommodates for asymptotically separating
sample sizes $(n_l)_{1\le l\le d}$. In model \eqref{E1} by partial
integration, the statistics $S_{jk}$ then correspond to
%
\begin{equation}
\label{spec}S_{jk}^{(l)}=\pi jh^{-1}\int
_{kh}^{(k+1)h}\varphi_{jk}(t)
\,dY^{(l)}(t)
\end{equation}
with block-wise cosine functions $\phi_{jk}=\Phi_{jk}'$ which form an
orthonormal system in $L^2([0,1])$. As they serve also as the
eigenfunctions of the Karhunen--Lo\`eve decomposition of a Brownian
motion, they carry maximal information for $\Sigma$. What is more, the
spectral statistics $S_{jk}$ de-correlate the observations, and thus
form their (block-wise) principal components, assuming that $\Sigma$
and the noise levels are block-wise constant. Then
the entire family $(S_{jk})_{jk}$ is independent and
%
\begin{equation}
\label{localnorm2} S_{jk}\sim\mathbf{N}(0,C_{jk}),\qquad
C_{jk}=\Sigma^{kh}+\pi ^2j^2h^{-2}
\diag \bigl(H^{kh}_{n,l}\bigr)_l^2,
\end{equation}
with the $k$th block average $\Sigma^{kh}$ of $\Sigma$ and $H_{n,l}^{kh}$
encoding the local noise level; cf.~\eqref{localnorm} below.

This relationship suggests to estimate $\Sigma^{kh}$ in each frequency
$j$ by bias-corrected spectral covariance matrices $S_{jk}S_{jk}^{\top}
-\pi^2j^2h^{-2}\diag{((H_{n,l}^{kh})^2)}_l$. The resulting \textit{local
method of moment (LMM) estimator} then takes weighted sums across all
frequencies and blocks
\[
\operatorname{LMM}^{(n)}:=\sum_{k=0}^{h^{-1}-1}
h \sum_{j=1}^{\infty}W_{jk} \operatorname{vec}
\bigl(S_{jk}S_{jk}^{\top} -\pi^2j^2h^{-2}
\diag{\bigl(\bigl(H_{n,l}^{kh}\bigr)^2
\bigr)}_l \bigr),
\]
where $W_{jk}\in\R^{d^2\times d^2}$ are weight matrices and matrices
$A\in\R^{d\times d}$ are transformed into vectors via
\[
\operatorname{vec}(A):= (A_{11},A_{21},\ldots,A_{d1},A_{12},A_{22},
\ldots,A_{d2}, \ldots,A_{d(d-1)},A_{dd}
)^{\top}\in{\mathds{R}}^{d^2}.
\]
To ensure efficiency, the oracle and adaptive choice of the weight
matrices $W_{jk}$ are based on Fisher information calculus; see
Section~\ref{sec:3} below. Let us mention that scalar weights for each matrix
estimator entry as in Bibinger and Rei{ss} \cite{bibingerreiss} will
not be sufficient to achieve (asymptotic) efficiency and the $W_{jk}$
will be densely populated.

The matrix estimator {per se} is not ensured to be positive
semi-definite, but it is symmetric and can be projected onto the cone
of positive semi-definite matrices by putting negative eigenvalues to
zero. This projection only improves the estimator, while the adjustment
is asymptotically negligible in the CLT. For the relevant question of
confidence sets, the estimated nonasymptotic Fisher information
matrices are positive--semi-definite (basically, estimating $C_{jk}$
from above) and finite sample inference is always feasible.

\subsection{The efficiency bound}

Deriving the covariance structure of a matrix estimator requires tensor
notation; see, for example, Fackler \cite{fackler} or textbooks on
multivariate analysis. Kronecker products $A\otimes B\in\R^{d^2\times
d^2}$ for $A,B\in\R^{d\times d}$ are defined as
\[
(A\otimes B)_{d(p-1)+q,d(p'-1)+q'}=A_{pp'}B_{qq'},\qquad
p,q,p',q'=1,\ldots,d.
\]
The covariance structure for the empirical covariance matrix of a
standard Gaussian vector is defined as
%
\begin{equation}
\label{EqZ} {\cal Z}=\Cov\bigl(\operatorname{vec}\bigl(ZZ^\top\bigr)\bigr)\in
\R^{d^2\times d^2}\qquad\mbox{for }Z\sim \mathbf{N}(0,E_d).
\end{equation}
We can calculate $\cal Z$ explicitly as
\[
{\cal Z}_{d(p-1)+q,d(p'-1)+q'}=(1+\delta_{p,q})\delta_{\{p,q\},\{
p',q'\}
},\qquad
p,q,p',q'=1,\ldots,d,
\]
exploiting the property ${\cal Z}\operatorname{vec}(A)=\operatorname{vec}(A+A^\top)$ for all $A\in
\R
^{d\times d}$.
It is classical (cf. Lehmann and Casella \cite{LehmannCasella}), that
for $n$ i.i.d. Gaussian observations $Z_i\sim{\mathbf N}(0,\Sigma)$,
the empirical covariance matrix $\hat\Sigma_n=\frac{1}n\sum_{i=1}^nZ_iZ_i^\top$ is an asymptotically efficient estimator of
$\Sigma
$ satisfying
\[
\sqrt{n} \operatorname{vec}(\hat\Sigma_n-\Sigma)\mathop{\rightarrow}\limits^{{\cal L}} \mathbf {N}
\bigl(0,(\Sigma\otimes\Sigma){\cal Z}\bigr).
\]
The asymptotic variance can be easily checked by the rule
$\operatorname{vec}(ABC)=(C^\top\otimes A)\operatorname{vec}(B)$ and the fact that $\cal Z$ commutes
with $(\Sigma\otimes\Sigma)^{1/2}=\Sigma^{1/2}\otimes\Sigma
^{1/2}$ such
that $\Cov(\operatorname{vec}(\hat\Sigma_n))$ equals
\[
\Cov\bigl(\operatorname{vec}\bigl(\Sigma^{1/2}ZZ^\top\Sigma^{1/2}
\bigr)\bigr) =\bigl(\Sigma^{1/2}\otimes\Sigma^{1/2}\bigr){\cal
Z}\bigl(\Sigma^{1/2}\otimes \Sigma ^{1/2}\bigr)=(\Sigma\otimes
\Sigma){\cal Z}.
\]

Before proceeding, let us provide an intuitive understanding of the
efficiency gains from other dimensions by looking at another easy case
with independent observations. Suppose an i.i.d. sample $Z_1,\ldots,Z_n\sim{\mathbf N}(0,\Sigma)$, $\Sigma\in\R^{d\times d}$ unknown, is
observed indirectly via $Y_j=Z_j+\eps_j$, blurred by independent
nonhomogeneous noise $\eps_j\sim{\mathbf N}(0,\eta_j^2E_d)$,
$j=1,\ldots,n$, with identity matrix $E_d$ and $\eta_1,\ldots,\eta_n>0$
known. Then the sample covariance matrix $\hat C_Y=\sum_{j=1}^nY_jY_j^\top$ and a bias correction yields a first natural
estimator $\hat\Sigma^{(1)}=\hat C_Y-\eta^2 E_d$, $\eta^2=\sum_j\eta
_j^2/n$. Yet, we can weight each observation differently by some
$w_j\in
\R$ with $\sum_jw_j=1$ and obtain a second estimator $\hat\Sigma
^{(2)}=\sum_{j=1}^nw_j(Y_jY_j^\top-\eta_j^2E_d)$. For optimal
estimation of the first variance $\Sigma_{11}$, we should choose (as in
a weighted least squares approach) $w_j=(\Sigma_{11}+\eta
_j^2)^{-2}/(\sum_i(\Sigma_{11}+\eta_i^2)^{-2})$ to obtain
\[
\Var\bigl(\hat\Sigma_{11}^{(2)}\bigr)=2 \Biggl(\sum
_{j=1}^n\bigl(\Sigma_{11}+\eta
_j^2\bigr)^{-2} \Biggr)^{-1}\le
\frac{2}{n^2}\sum_{j=1}^n\bigl(
\Sigma_{11}+\eta _j^2\bigr)^2=
\Var\bigl(\hat\Sigma_{11}^{(1)}\bigr),
\]
where the bound is due to Jensen's inequality.
More generally, we can use weight matrices $W_j\in\R^{d^2\times d^2}$
and introduce
$\hat\Sigma^{(3)}=\sum_{j=1}^nW_j \operatorname{vec}(Y_jY_j^\top-\eta_j^2E_d)$. Since
the matrices $C_j=\Sigma+\eta_j^2E_d$ commute, its covariance structure
is given by
$\Cov(\hat\Sigma^{(3)})=\sum_{j=1}^n W_j(C_j\otimes C_j){\cal Z}
W_j^\top$.
This is minimal for $W_j=(\sum_i C_i^{-1}\otimes
C_i^{-1})^{-1}(C_j^{-1}\otimes C_j^{-1})$, which gives
$\Cov(\hat\Sigma^{(3)})=(\sum_j C_j^{-1}\otimes C_j^{-1})^{-1}{\cal Z}$.
The matrices $W_j$ are diagonal if all $\eta_j$ coincide or if $\Sigma$
is diagonal. Otherwise, the estimator for one matrix entry involves in
general all other entries in $Y_jY_j^\top$ and in particular $\Var
(\hat
\Sigma^{(3)}_{11})<\Var(\hat\Sigma^{(2)}_{11})$ holds. Considering as
$(Y_j)_{j\ge1}$ the spectral statistics $(S_{jk})_{j\ge1}$ on a
fixed block $k$, this example reveals the heart of our analysis for the
LMM estimator.

Similar to the i.i.d. case, for equidistant observations
$(X_{i/n})_{1\le i\le n}$ of $X_t=\int_0^t\Sigma(s)\,dB_s$ without noise,
the realised covariation matrix
\[
\widehat{\mathit{RCV}}_n=\sum_{i=1}^n(X_{i/n}-X_{(i-1)/n})
(X_{i/n}-X_{(i-1)/n})^\top
\]
satisfies the $d^2$-dimensional central limit theorem
\[
\sqrt{n} \operatorname{vec} \biggl(\widehat{\mathit{RCV}}_n-\int_0^1
\Sigma(t) \,dt \biggr)\mathop{\rightarrow}\limits^{{\cal L}} \mathbf{N} \biggl(0, \biggl(\int
_0^1\Sigma(t)\otimes\Sigma(t) \,dt \biggr){\cal
Z} \biggr),
\]
provided $t\mapsto\Sigma(t)$ is Riemann-integrable. In the
one-dimensional case, it is known that in the presence of noise the
optimal rate of convergence not only changes from $n^{-1/2}$ to
$n^{-1/4}$, but also the optimal variance changes from $2\sigma^4$ to
$8\sigma^3$. The corresponding analogue of $(\Sigma\otimes\Sigma
){\cal
Z}$ in the noisy case is not obvious at all. So far, only the result by
Barndorff-Nielsen et al. \cite{bn2011}, establishing $(\Sigma\otimes
\Sigma){\cal Z}$ as limiting variance under the suboptimal rate
$n^{-1/5}$, was available and even a conjecture concerning the
efficiency bound was lacking.

To illustrate our multivariate efficiency results under noise let us
for simplicity illustrate a special case of Corollary~\ref{corclt} for
equidistant observations, that is, $t_i^{(l)}=i/n$, and homogeneous noise
level $\eta_l=\eta$. Then the oracle (and also the adaptive) estimator
$\operatorname{LMM}^{(n)}$ satisfies under mild regularity conditions
(omitting the integration variable $t$)
\[
n^{1/4} \biggl(\operatorname{LMM}^{(n)} -\int
_0^1\operatorname{vec}(\Sigma) \biggr)\mathop{\rightarrow}\limits^{{\cal L}} \mathbf{N} \biggl(0,2\eta \int_0^1
\bigl(\Sigma\otimes\Sigma^{1/2} + \Sigma^{1/2} \otimes \Sigma
\bigr){\cal Z} \biggr).
\]
In Theorem~\ref{ThmCR}, it will be shown that this asymptotic
covariance structure is optimal in a semi-parametric Cram\'er--Rao
sense. Consequently, the efficient asymptotic variance $\AVAR$ for
estimating $\int_0^1\Sigma_{pp}(t) \,dt$ is
\[
\AVAR \biggl(\int_0^1\Sigma_{pp}(t)
\,dt \biggr)=8\eta\int_0^1 \Sigma
_{pp}(t) \bigl(\Sigma^{1/2}(t)\bigr)_{pp} \,dt.
\]
For the asymptotic variance of the estimator of $\int_0^1\Sigma_{pq}(t)
\,dt$, we obtain
\[
2\eta\int_0^1 \bigl(\bigl(\Sigma
^{1/2}\bigr)_{pp}\Sigma_{qq}+ \bigl(
\Sigma^{1/2}\bigr)_{qq}\Sigma_{pp} + 2\bigl(\Sigma
^{1/2}\bigr)_{pq}\Sigma_{pq} \bigr) (t) \,dt.
\]
Let us illustrate specific examples. First, in the case $d=1$ and
$\Sigma=\sigma^2$, the asymptotic variance simplifies to
\[
\AVAR \biggl(\int_0^1\sigma^2(t)
\,dt \biggr)=8\eta\int_0^1\sigma^3(t)
\,dt,
\]
coinciding with the efficiency bound in Rei{ss} \cite{reiss}. For
$d>1$, $p\neq q$ in the independent case $\Sigma=\diag(\sigma
_p^2)_{1\le p\le d}$, we find
\[
\AVAR \biggl(\int_0^1\Sigma_{pq}(t)
\,dt \biggr) =2\eta\int_0^1\bigl(
\sigma_p^2\sigma_q+\sigma_p
\sigma_q^2\bigr) (t) \,dt.
\]
An interesting example is the case $d=2$ with spot volatilities $\sigma
_1^2(t)=\sigma_2^2(t)=\sigma^2(t)$ and general correlation $\rho(t)$,
that is, $\sigma_{12}(t)=(\rho\sigma_1\sigma_2)(t)$. In this case,
we obtain
\begin{eqnarray*}
\AVAR \biggl(\int_0^1\sigma_{1}^2(t)
\,dt \biggr)&=&4\eta\int_0^1 \sigma
^3(t) \bigl(\sqrt{1+\rho(t)}+\sqrt{1-\rho(t)} \bigr) \,dt,
\\
\AVAR \biggl(\int_0^1\sigma_{12}(t)
\,dt \biggr) &=&2\eta\int_0^1 \sigma
^3(t) \bigl(\bigl(1+\rho(t)\bigr)^{3/2}+\bigl(1-\rho(t)
\bigr)^{3/2}\bigr) \,dt.
\end{eqnarray*}
With time-constant parameters, these bounds decay for $\sigma_1^2$
(resp., grow for $\sigma_{12}$) in $\vert\rho \vert$ from $8\eta
\sigma^3$
(resp., $4\eta\sigma^3$) at $\rho=0$ to $4\sqrt{2}\eta\sigma^3$
at $\vert\rho \vert=1$ for both cases.

\begin{figure}

\includegraphics{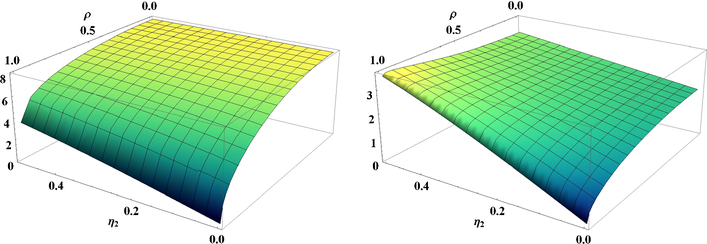}

\caption{Asymptotic variances of LMM for volatility
$\sigma_1^2$ (left) and co-volatility $\sigma_{12}$ (right)
plotted against correlation $\rho$ and noise level
$\eta_2$ (constant in time).}\label{fig1}
\end{figure}

Figure~\ref{fig1} illustrates the asymptotic variance in the case of
volatilities $\sigma^2_1=\sigma^2_2=1$ and co-volatility $\sigma
_{12}=\rho$ (constant in time) and the first noise level given by
$\eta
_1=1$. The left plot shows the asymptotic variance of the estimator of
$\sigma_1^2$ as a function of $\rho$ and $\eta_2$. It is shown that
using observations from the other (correlated) process induces clear
efficiency gains rising in $\rho$. If the noise level $\eta_2$ for the
second process is small, the asymptotic variance can even approach
zero. The plot on the right shows the same dependence for estimating
the co-volatility $\sigma_{12}$. For comparable size of $\eta_2$ and
$\eta_1$ the asymptotic variance increases in $\rho$, which is
explained by the fact that also the value to be estimated increases.
For small values of $\eta_2$, however, the efficiency gain by
exploiting the correlation prevails.

For larger dimensions $d$, the variance can even be of order $\mathcal
{O}(1/\sqrt{d})$: in the concrete case where all volatilities and noise
levels equal $1$, the asymptotic variance for estimating $\sigma_1^2$
can be reduced from $8$ (using only observations from the first
component or if $\Sigma$ is diagonal) down to $8/\sqrt{d}$ (in case of
perfect correlation).

All the preceding examples can be worked out for different noise levels
$\eta_p$. For a fixed entry $(p,q)$, generally all noise levels enter
and can be only de-coupled in case of a diagonal covariation matrix
$\Sigma=\diag(\sigma_p^2)_{1\le p\le d}$. Then the covariance
simplifies to
\[
p\neq q\dvtx\qquad 2\int_0^1 \bigl(
\eta_p\sigma_p\sigma_q^2+
\eta _q\sigma _q\sigma _p^2
\bigr) (t) \,dt;\qquad p=q\dvtx\qquad 8\int_0^1 \bigl(
\eta_p\sigma_p^3 \bigr) (t) \,dt.
\]

Finally, we can also investigate the estimation of the entire quadratic
covariation matrix $\int_0^1\Sigma(t) \,dt$ under homogeneous noise level
and measure its loss by the squared ($d\times d$)-Hilbert--Schmidt
norm. Summing up the variances for each entry, we obtain the asymptotic risk
\[
\frac{4\eta}{\sqrt n}\int_0^1 \bigl(\trace
\bigl(\Sigma^{1/2}\bigr)\trace (\Sigma )+\trace\bigl(
\Sigma^{3/2}\bigr) \bigr) (t) \,dt.
\]
This can be compared with the corresponding Hilbert--Schmidt norm error
$\frac{1}n(\trace(\Sigma)^2+\trace(\Sigma^2))$\vspace*{1pt} for the empirical
covariance matrix in the i.i.d. Gaussian $\mathbf{N}(0,\Sigma)$-setting.


\section{From discrete to continuous-time observations}
\label{sec:2}
\subsection{Setting}

First, let us specify different regularity assumptions. For functions
$f\dvtx[0,1]\to\R^m$, $m\ge1$ or also $m=d\times d$ for matrix
values, we
introduce the $L^2$-Sobolev ball of order $\alpha\in(0,1]$ and radius
$R>0$ given by
\begin{eqnarray}
H^{\alpha}(R)=\bigl\{f\in H^{\alpha}\bigl([0,1],\mathds
{R}^{m}\bigr)|\|f\|_{H^{\alpha}}\le R\bigr\}\nonumber\\
\eqntext{\mbox{where }\displaystyle\|f
\|_{H^{\alpha}}\: =\max_{1\le i\le m}\|f_{i}
\|_{H^{\alpha}},}
\end{eqnarray}
which for matrices means $\|f\|_{H^{\alpha}}\:=\max_{1\le i,j\le d}\|
f_{ij}\|_{H^{\alpha}}$.
We also consider H\"older spaces $C^\alpha([0,1])$ and Besov spaces
$B^{\alpha}_{p,q}([0,1])$ of such functions.
Canonically, for matrices we use the spectral norm $\|\cdot\|$ and we
set $\|f\|_{\infty}\:=\sup_{t\in[0,1]}{\|f(t)\|}$.

In order to pursue asymptotic theory, we impose that the deterministic
samplings in each component can be transferred to an equidistant scheme
by respective quantile transformations independent of $n_l,1\le l\le d$.

\begin{assump}[\hspace*{-4pt}($\alpha$)]\label{sampling} Suppose that
there exist
differentiable distribution functions $F_l$ with $F_l'\in C^\alpha
([0,1])$, $F_l(0)=0$, $F_l(1)=1$ and $F_l'>0$ such that the observation
times in \eqref{E0} are generated by $t_i^{(l)}=F_l^{-1}(i/n_l)$,
$0\le
i\le n_l$, $1\le l\le d$.
\end{assump}

We gather all assertions on the instantaneous co-volatility matrix
function $\Sigma(t)$, $t\in[0,1]$, which we shall require at some point.

\begin{assump}\label{smoothness}
Let $\Sigma\dvtx[0,1]\to\R^{d\times d}$ be a possibly random
function with
values in the class of symmetric, positive semi-definite matrices,
independent of $X$ and the observational noise, satisfying:
\begin{longlist}[(iii-$\underline\Sigma$)]
\item[(i-$\beta$)]$\Sigma\in H^{\beta}([0,1])$ for $\beta>0$.
\item[(ii-$\alpha$)]$\Sigma=\Sigma^B+\Sigma^M$ with $\Sigma^B\in
B_{1,\infty}^{\alpha}([0,1])$ for $\alpha>0$ and $\Sigma^M$ a
matrix-valued $L^2$-martingale.
\item[(iii-$\underline\Sigma$)] $\Sigma(t)\ge\underline{\Sigma}$
for a
strictly positive definite matrix $\underline\Sigma$ and all $t\in[0,1]$.
\end{longlist}
\end{assump}

We briefly discuss the different function spaces; see, for example,
Cohen \cite{Cohen}, Section~3.2, for a survey. First, any $\alpha$-H\"
older-continuous function lies in the $L^2$-Sobolev space $H^\alpha$
and any $H^\alpha$-function lies in the Besov space $B^\alpha
_{1,\infty
}$, where differentiability is measured in an $L^1$-sense. The
important class of bounded variation functions (e.g., modeling jumps in
the volatility) lies in $B^1_{1,\infty}$, but only in $H^\alpha$ for
$\alpha<1/2$. In particular, part (ii-$\alpha$), $\alpha\le1$, covers
$L^2$-semi-martingales by separate bounds on the drift (bounded
variation) and martingale part. Beyond classical theory in this area is
the fact that also nonsemi-martingales like fractional Brownian motion
$B^H$ with hurst parameter $H>1/2$ give rise to feasible volatility
functions in the results below, using $B^H\in C^{H-\eps}\cap
B^H_{1,\infty}$ for any $\eps>0$ as in Ciesielski {et al.}~\cite
{Ciesielski}.

In the sequel, the potential randomness of $\Sigma$ is often not
discussed additionally because by independence we can always work
conditionally on $\Sigma$. Finally, let us point out that we could
weaken the H\"older-assumptions on $F_1,\ldots,F_d$ toward Sobolev or
Besov regularity at the cost of tightening the assumptions on $\Sigma$.
For the sake of clarity, this is not pursued here.

Throughout the article, we write $Z_n=\mathcal{O}_P(\delta_n)$ and
$Z_n=\KLEINO_P(\delta_n)$ for a sequence of random variables $Z_n$ and
a sequence $\delta_n$, to express that $\delta_n^{-1}Z_n$ is tight and
tends to zero in probability, respectively. Analogously, $\mathcal{O}$
(or equivalently $\lesssim$) and $\KLEINO$ refer to deterministic
sequences. We write $Z_n\asymp Y_n$ if $Z_n=\mathcal{O}_P(Y_n)$ and
$Y_n=\mathcal{O}_P(Z_n)$ and the same for deterministic quantities.

\subsection{Continuous-time experiment}

\begin{defi}
Let $\mathcal{E}_0((n_l)_{1\le l\le d},\beta,R)$ with $n_l\in\mathds
{N},\beta\in(0,1],R>0$, be the statistical experiment generated by
observations from \eqref{E0} with $\Sigma\in H^{\beta}(R)$.
Analogously, let $\mathcal{E}_1((n_l)_{1\le l\le d},\beta,R)$ be the
statistical experiment generated by observing \eqref{E1} with the same
parameter class.
\end{defi}
As we shall establish next, experiments \eqref{E0} and \eqref{E1} will
be asymptotically equivalent as $n_l\to\infty,1\le l\le d$, at a
comparable speed, denoting
\[
n_{\mathrm{min}}=\min_{1\le l\le d}{n_l}
\quad\mbox{and}\quad n_{\mathrm{max}}=\max_{1\le l\le
d}{n_l}.
\]

\begin{thmm}\label{theo1}
Grant Assumption~\ref{sampling} with $\alpha=\beta$ on the design. The
statistical experiments $\mathcal{E}_0((n_l)_{1\le l\le d},\beta,R)$ and
$\mathcal{E}_1((n_l)_{1\le l\le d},\beta,R)$ are asymptotically
equivalent for any $\beta\in(0,1/2]$ and $R>0$, provided
$n_{\mathrm{min}}\rightarrow\infty$, $n_{\mathrm{max}}=\KLEINO((n_{\mathrm{min}})^{1+\beta})$.
More precisely, the Le Cam distance $\Delta$ is of order
\[
\Delta \bigl(\mathcal{E}_0\bigl((n_l)_{1\le l\le d},
\beta,R\bigr),\mathcal{E}_1\bigl((n_l)_{1\le l\le d},
\beta,R\bigr) \bigr)= \mathcal{O} \Biggl(R^2 \Biggl(\sum
_{l=1}^dn_l/\eta_l^2
\Biggr)n_{\mathrm{min}}^{-1-\beta
} \Biggr). 
\]
\end{thmm}

By inclusion, the result also applies for $\beta>1/2$ when in the
remaining expressions $\beta$ is replaced by $\min(\beta,1/2)$.
A standard Sobolev smoothness of $\Sigma$ is $\beta$ almost 1/2 for
diffusions with finitely many or absolutely summable jumps. In that
case, the asymptotic equivalence result holds if $n_{\mathrm{max}}$ grows more
slowly than~$n_{\mathrm{min}}^{3/2}$.
Theorem~\ref{theo1} is proved in the \hyperref[app]{Appendix} in a
constructive way by
warped linear interpolation, which yields a readily implementable
procedure; cf. Section~\ref{sec:5} below.


\section{Localisation and method of moments}
\label{sec:3}

\subsection{Construction}

We partition the interval $[0,1]$ in blocks $[kh,(k+1)h)$ of length
$h$. On each block a parametric MLE for a constant model could be
sought for. Its numerical determination, however, is difficult and
unstable due to the nonconcavity of the ML objective function and its
analysis is quite involved. Yet, the likelihood equation leads to
spectral statistics whose empirical covariances estimate the quadratic
covariation matrix. We therefore prefer a localised method of moments
(LMM) for these spectral statistics where for an adaptive version the
theoretically optimal weights are determined in a pre-estimation step,
in analogy with the classical (multi-step) GMM (generalised method of
moments) approach by Hansen \cite{Hansen}.

As motivated in Section~\ref{sec:1b}, let us consider
the local spectral statistics
$S_{jk}$ in \eqref{spec} from the continuous-time experiment \eqref{E1}.
First, we consider a locally constant approximation.

\begin{defi}
Set $\bar{f}_h(t):=h^{-1}\int_{kh}^{(k+1)h}f(s)\,ds$ for $t\in
[kh,(k+1)h)$ and a function $f$ on $[0,1]$. Assume $h^{-1}\in\mathds
{N}$ and let $X_t^h=X_0+\int_0^t\bar\Sigma^{{1}/{2}}_{h}(s)
\,d{B}_s$ with a $d$-dimensional standard Brownian motion ${B}$. Define
the process
%
{\renewcommand{\theequation}{$\mathcal{E}_2$}
\begin{equation}
\label{E2}
d\tilde{Y}_t=X_t^h
\,dt+\diag \Bigl(\sqrt{\overline {H}^2_{n,l,h}(t)}
\Bigr)_{1\le l\le d } \,d{W}_t,\qquad t\in[0,1],
\end{equation}}
\hspace*{-6pt}where ${W}$ is a standard Brownian motion independent of ${B}$ and with
noise level \eqref{noiselevel}. The observations from \eqref{E2} for
$\Sigma\in H^{\beta}(R)$ generate experiment $\mathcal
{E}_2((n_l)_{1\le
l\le d},h,\beta,R)$.
\end{defi}

In experiment \eqref{E2}, we thus observe a process with a
co-volatility matrix which is constant on each block $[kh,(k+1)h)$ and
corrupted by noise of block-wise constant magnitude. Our approach is
founded on the idea that for small block sizes $h$ and sufficient
regularity this piecewise constant approximation is close to \eqref{E1}.


The LMM estimator is built from the data in experiment $\mathcal{E}_1$,
but designed for the block-wise parametric model \eqref{E2}. In \eqref
{E2}, the $L^2$-orthogonality of $(\phi_{jk})$ as well as that of
$(\Phi
_{jk})$ imply (cf. Rei{ss} \cite{reiss})
%
\setcounter{equation}{0}
\begin{equation}
\label{lawSjk} S_{jk}\sim\mathbf{N} ({0},C_{jk} )\qquad
\mbox{independent for all }  (j,k)
\end{equation}
with covariance matrix
%
\begin{eqnarray}
\label{localnorm} C_{jk}&=& \Sigma^{kh}+\pi^2j^2h^{-2}
\diag\bigl(H^{kh}_{n,l}\bigr)_l^2,\qquad
\Sigma ^{kh}= \bar{\Sigma}_h(kh),
\nonumber
\\[-8pt]
\\[-8pt]
\nonumber
 H^{kh}_{n,l}&=&
\bigl(\overline {H}^2_{n,l,h}(kh)\bigr)^{1/2}.
\end{eqnarray}
Let us further introduce the Fisher information-type matrices
\[
I_{jk}=C_{jk}^{-1}\otimes C_{jk}^{-1},\qquad
I_k=\sum_{j=1}^{\infty}I_{jk},\qquad
j\ge1, k=0,\ldots,h^{-1}-1.
\]

Our local method of moments estimator with oracle weights
$\operatorname
{LMM}_{\mathrm{or}}^{(n)}$ exploits that on each block a natural second moment
estimator of $\Sigma^{kh}$ is given as a convex combination of the
bias-corrected empirical covariances:
%
\begin{equation}\qquad
\label{orlle} \operatorname{LMM}_{\mathrm{or}}^{(n)}:=\sum
_{k=0}^{h^{-1}-1} h \sum_{j=1}^{\infty}W_{jk}
\operatorname{vec} \biggl(S_{jk}S_{jk}^{\top} -\frac{\pi^2j^2}{h^{2}}
\diag{\bigl(\bigl(H_{n,l}^{kh}\bigr)^2
\bigr)}_{1\le l\le
d} \biggr).
\end{equation}
The optimal weight matrices $W_{jk}$ in the oracle case are obtained as
%
\begin{equation}
\label{EqWjk} W_{jk}:=I_k^{-1}I_{jk}
\in\mathds{R}^{d^2\times d^2}.
\end{equation}
Note that $C_{jk}, I_{jk},I_k$ and $W_{jk}$ all depend on $(n_l)_{1\le
l\le d}$ and $h$, which is omitted in the notation. Finally, observe
that \eqref{localnorm} and $\sum_jW_{jk}=E_{d^2}$ imply that
$\operatorname{LMM}_{\mathrm{or}}^{(n)}$ is unbiased under model \eqref{E2}.

\subsection{Asymptotic properties of the estimators}

We formulate the main result of this section that the oracle estimator
\eqref{orlle} and also a fully adaptive version for the quadratic
covariation matrix satisfy central limit theorems.

\begin{thmm}\label{cltorlle}
Let Assumptions \ref{sampling}($\alpha$), \ref
{smoothness}(\textup{ii}-$\alpha
$) and \ref{smoothness}(\textup{iii}-$\underline\Sigma$) with $\alpha>1/2$ hold
true for observations from model \eqref{E1}. The oracle estimator
\eqref
{orlle} yields a consistent estimator for $\operatorname{vec}(\int_0^1\Sigma(s) \,ds)$
as $n_{\mathrm{min}}\rightarrow\infty$ and $h=h_0n_{\mathrm{min}}^{-1/2}$ with $h_0\to
\infty$. Moreover, if $n_{\mathrm{max}}=\KLEINO(n_{\mathrm{min}}^{2\alpha})$ and
$h=\KLEINO(n_{\mathrm{max}}^{-1/4})$,
then a multi-variate central limit theorem holds:
%
\begin{equation}
\label{fclt} \mathbf{I}_n^{1/2} \biggl(
\operatorname{LMM}_{\mathrm{or}}^{(n)} -\operatorname{vec} \biggl(\int
_0^1\Sigma(s) \,ds \biggr) \biggr)\stackrel{
\mathcal {L}} {\longrightarrow} \mathbf{N}(0,{\cal Z})\qquad \mbox{in }
\mathcal{E}_1
\end{equation}
with $\cal Z$ from \eqref{EqZ} and $\mathbf{I}_n^{-1}=\sum_{k=0}^{h^{-1}-1}h^2I_{k}^{-1}$.
\end{thmm}

While the preceding result is most useful in applications, it is, of
course, important to understand the asymptotic covariance structure of
the estimator as well; cf. the discussion of efficiency above.
Therefore, we consider comparable sample sizes and normalise with
$n_{\mathrm{min}}^{1/4}$ in the following result.

\begin{cor} \label{corclt}
Under the assumptions of Theorem~\ref{cltorlle} suppose\break
$n_{\mathrm{min}}/  n_p\to
\nu_p\in(0,1]$ for $p=1,\ldots,d$ and introduce ${\cal H}(t)=\diag
(\eta
_p\nu_p^{1/2}\times F_p'(t)^{-1/2})_{p}\in\R^{d\times d}$ and $\Sigma
_{\cal
H}^{1/2}:={\cal H}({\cal H}^{-1}\Sigma{\cal H}^{-1})^{1/2}{\cal H}$. Then
%
\begin{equation}
\label{fclt2} n_{\mathrm{min}}^{1/4} \biggl(\operatorname{LMM}_{\mathrm{or}}^{(n)}
-\operatorname{vec} \biggl(\int_0^1\Sigma(s) \,ds \biggr)
\biggr)\stackrel{\mathcal {L}} {\longrightarrow} \mathbf{N} \bigl(0,
\mathbf{I}^{-1}{\cal Z} \bigr)\qquad \mbox{in } \mathcal{E}_1
\end{equation}
with $\mathbf{I}^{-1}=2
\int_0^1(\Sigma\otimes\Sigma_{\cal H}^{1/2} + \Sigma_{\cal H}^{1/2}
\otimes\Sigma)(t) \,dt$.
In particular, the entries satisfy for $p,q=1,\ldots,d$
%
\begin{eqnarray}
\label{avar}&& n_{\mathrm{min}}^{1/4} \biggl(\bigl(
\operatorname{LMM}_{\mathrm{or}}^{(n)}\bigr)_{p(d-1)+q} -\int
_0^1\Sigma_{pq}(s) \,ds \biggr)
\nonumber
\\
&&\qquad\stackrel{\mathcal {L}} {\longrightarrow}\mathbf{N}
\biggl(0,2(1+\delta_{p,q})\\
&&\hspace*{57pt}{}\times\int
_0^1\bigl(\Sigma _{pp}\bigl(\Sigma
_{\cal H}^{1/2}\bigr)_{qq}+\Sigma_{qq}
\bigl(\Sigma_{\cal H}^{1/2}\bigr)_{pp}+2\Sigma
_{pq}\bigl(\Sigma_{\cal H}^{1/2}\bigr)_{pq}
\bigr) (t) \,dt \biggr). \nonumber
\end{eqnarray}
\end{cor}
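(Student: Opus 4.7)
The plan is to derive Corollary \ref{corclt} from Theorem \ref{cltorlle} by computing the deterministic limit of the normalising matrix $\mathbf{I}_n^{-1}=\sum_{k}h^2 I_k^{-1}$ under the comparable-sample-size regime $n_{\min}/n_p\to\nu_p$. In that regime the local noise variance satisfies $(H^{kh}_{n,l})^2=n_{\min}^{-1}(\mathcal H(kh))_{ll}^2(1+o(1))$, so the spectral covariance matrix admits the factorisation $C_{jk}=\mathcal H(A_k+z_j E_d)\mathcal H$ with $\mathcal H:=\mathcal H(kh)$, $A_k:=\mathcal H^{-1}\Sigma^{kh}\mathcal H^{-1}$ and rescaled frequencies $z_j:=\pi^2 j^2 h^{-2}n_{\min}^{-1}$. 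This factorisation simultaneously diagonalises the noise matrix and decouples the frequency dependence from the geometry of $\Sigma^{kh}$.

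Diagonalising $A_k=U_kD_kU_k^{\top}$, each Kronecker factor $C_{jk}^{-1}\otimes C_{jk}^{-1}$ becomes diagonal in the basis induced by $\mathcal H^{-1}U_k\otimes \mathcal H^{-1}U_k$ with entries $(\lambda_{k,i}+z_j)^{-1}(\lambda_{k,i'}+z_j)^{-1}$. The sum over $j$ is a Riemann sum of step $\pi h^{-1}n_{\min}^{-1/2}$ approximating the classical integral
\[
\int_0^\infty\frac{dt}{(a+t^2)(b+t^2)}=\frac{\pi}{2\sqrt{ab}\bigl(\sqrt a+\sqrt b\bigr)},
\]
so that, after inversion, its contribution is $hn_{\min}^{1/2}/2$ times the diagonal matrix associated to $D_k\otimes D_k^{1/2}+D_k^{1/2}\otimes D_k$. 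Undoing the conjugations via the key identities $\mathcal H A_k \mathcal H=\Sigma^{kh}$ and $\mathcal H A_k^{1/2}\mathcal H=\Sigma_{\mathcal H}^{1/2}(kh)$ yields, uniformly in $k$,
\[
h^2 I_k^{-1}=2hn_{\min}^{-1/2}\bigl(\Sigma^{kh}\otimes\Sigma_{\mathcal H}^{1/2}(kh)+\Sigma_{\mathcal H}^{1/2}(kh)\otimes\Sigma^{kh}\bigr)+o(n_{\min}^{-1/2}).
\]
Summation over $k$ is itself a Riemann sum on $[0,1]$, converging under the regularity assumptions on $\Sigma$ and the $F_l'$ to $n_{\min}^{-1/2}\mathbf I^{-1}$. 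Combining this with Theorem \ref{cltorlle} via Slutsky's lemma and the continuous mapping theorem, and using the elementary observation that $\mathcal Z$ commutes with every matrix of the shape $A\otimes B+B\otimes A$ (hence with $\mathbf I^{-1}$ and $\mathbf I^{-1/2}$), converts the pre-limit $\mathbf I_n^{1/2}(\operatorname{LMM}_{or}^{(n)}-vec(\int_0^1\Sigma))\stackrel{\mathcal L}{\to}\mathbf N(0,\mathcal Z)$ into \eqref{fclt2} with limiting covariance $\mathbf I^{-1/2}\mathcal Z\mathbf I^{-1/2}=\mathbf I^{-1}\mathcal Z$.

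The entry-wise statement \eqref{avar} is then read off the diagonal of $\mathbf I^{-1}\mathcal Z$ at the index corresponding to the $(p,q)$-entry: from $\mathcal Z\,vec(M)=vec(M+M^{\top})$ the factor $1+\delta_{p,q}$ appears as the doubling on the symmetric diagonal, and evaluating $(A\otimes B)\,vec(e_pe_q^{\top}+e_qe_p^{\top})$ with $A,B\in\{\Sigma(t),\Sigma_{\mathcal H}^{1/2}(t)\}$ picks out precisely the entries $\Sigma_{pp}(\Sigma_{\mathcal H}^{1/2})_{qq}$, $\Sigma_{qq}(\Sigma_{\mathcal H}^{1/2})_{pp}$ and $2\Sigma_{pq}(\Sigma_{\mathcal H}^{1/2})_{pq}$ whose $t$-integrals appear in the variance. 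The main technical obstacle I anticipate is controlling the Riemann-sum error in $j$ uniformly over $k$ and over the eigenvalues $\lambda_{k,i}$: here Assumption \ref{smoothness}(iii-$\underline\Sigma$) provides the uniform lower bound on the $\lambda_{k,i}$ making the integrand uniformly integrable with well-controlled tails, while the step-size conditions $h\gg n_{\min}^{-1/2}$ and $h=\KLEINO(n_{\max}^{-1/4})$ ensure both that the Riemann sum in $j$ has sufficiently many nodes and that the outer Riemann sum in $k$ is fine enough so that the overall approximation error is of smaller order than the leading $n_{\min}^{-1/2}$ term.
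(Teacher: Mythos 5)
Your argument is correct and follows essentially the same route as the paper's proof: block-wise diagonalisation of $C_{jk}$ via the congruence by $\mathcal{H}^{-1}$ and an orthogonal matrix, explicit evaluation of the sum over $j$, pulling the conjugation back out through $\mathcal{H}A_k\mathcal{H}=\Sigma^{kh}$ and $\mathcal{H}A_k^{1/2}\mathcal{H}=\Sigma_{\cal H}^{1/2}$, a Riemann sum over $k$, and commutation with $\cal Z$ to identify the limiting covariance. The only substantive difference is that the paper evaluates $\sum_j(\Lambda^{kh}_{pp}+\pi^2j^2h^{-2})^{-1}(\Lambda^{kh}_{qq}+\pi^2j^2h^{-2})^{-1}$ in closed form via a $\coth$ identity (which yields explicit exponential and $h_0^{-1}$ error terms) where you use a Riemann-sum approximation of $\int_0^\infty((a+t^2)(b+t^2))^{-1}\,dt$, and the prefactor in the sentence preceding your display should read $2h^{-1}n_{min}^{-1/2}$ rather than $hn_{min}^{1/2}/2$ --- the displayed formula for $h^2I_k^{-1}$ that follows is already the correct one.
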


The variance \eqref{avar} will coincide with the lower bound obtained
in Section~\ref{sec:4} below. The local noise level in ${\cal H}(t)$
depends on the observational noise level $\eta_p$ and the local sample
size $\nu_p^{-1}F_p'(t)$, $p=1,\ldots,d$, after normalisation by $n_{\mathrm{min}}$.
It is easy to see that in the case $n_{\mathrm{min}}/n_p\to0$ the asymptotic
variance vanishes for all entries $(p,q)$, $q=1,\ldots,d$. We infer the
structure of the asymptotic covariance matrix using block-wise
diagonalisation in Appendix~\ref{appb}.

To obtain a feasible estimator, the optimal weight matrices
$W_{jk}=W_j(\Sigma^{kh})$ and the information-type matrices
$I_{jk}=I_j(\Sigma^{kh})$ are estimated in a preliminary step from the
same data. To reduce variability in the estimate, a coarser grid of
$r^{-1}$ equidistant intervals, $r/h\in\mathds{N}$ is employed for
$\hat W_{jk}$. As derived in Bibinger and Rei{ss} \cite{bibingerreiss}
for supremum norm loss and extended to $L^1$-loss and Besov regularity
using the $L^1$-modulus of continuity as in the case of wavelet
estimators (Corollary~3.3.1 in Cohen \cite{Cohen}), a preliminary
estimator $\hat\Sigma(t)$ of the instantaneous co-volatility matrix
$\Sigma(t)$ exists with
%
\begin{equation}
\label{EqPilot} \Vert\hat\Sigma-\Sigma \Vert _{L^1}=\mathcal
{O}_P \bigl(n_{\mathrm{min}}^{-\alpha/(4\alpha+2)} \bigr)
\end{equation}
for $\Sigma\in B^\alpha_{1,\infty}([0,1])$. For block $k$ with
$kh\in
[mr,(m+1)r)$, we set
\[
\hat W_{jk}=W_j\bigl(\hat\Sigma^{mr}\bigr),\qquad
\hat I_{jk}=I_j\bigl(\hat\Sigma ^{kh}\bigr)\qquad
\mbox {with } \hat\Sigma^{mr}=\overline{\hat\Sigma}_r(mr),
\hat\Sigma ^{kh}=\overline{\hat\Sigma}_h(kh).
\]
The LMM estimator with adaptive weights is then given by
%
\begin{equation}\qquad
\label{adlle} \operatorname{LMM}_{\mathrm{ad}}^{(n)} =\sum
_{k=0}^{h^{-1}-1}h \sum_{j=1}^{\infty}
\hat W_{jk} \operatorname{vec} \biggl(S_{jk}S_{jk}^{\top}
-\frac{\pi^2j^2}{h^{2}}\diag{\bigl(\bigl(H_{n,l}^{kh}
\bigr)^2\bigr)}_{1\le l\le
d} \biggr).
\end{equation}
We estimate the total covariance matrix via
%
\begin{equation}
\label{EqIhat} \hat{\mathbf{I}}_n^{-1}=\sum
_{k=0}^{h^{-1}-1}h^2 \Biggl(\sum
_{j=1}^\infty \hat I_{jk}
\Biggr)^{-1}.
\end{equation}
As $j\to\infty$, the weights $W_j(\Sigma)$ and the matrices
$I_j(\Sigma
)$ decay like $j^{-4}$ in norm, compare Lemma~\ref{LemWjk} below, such
that in practice a finite sum over frequencies $j$ suffices. By a tight
bound on the derivatives of $\Sigma\mapsto W_j(\Sigma)$, we show in
Appendix~\ref{App:Adaptive} the following general result.

\begin{thmm}\label{cltadlle}
Suppose $\Sigma\in B^\alpha_{1,\infty}([0,1])$ for $\alpha\in
(1/2,1]$ satisfying\break
$ \alpha/(2\alpha+1)>\log(n_{\mathrm{max}})/\log(n_{\mathrm{min}})-1.$
Choose $h,r\to0$ such that $h_0=\break hn_{\mathrm{min}}^{1/2}\asymp\log(n_{\mathrm{min}})$
and $n_{\mathrm{min}}^{-\alpha/(2\alpha+1)}\lesssim r\lesssim
(n_{\mathrm{min}}/n_{\mathrm{max}})^{1/2}$, $h^{-1},r^{-1},r/h\in\N$. If the pilot
estimator $\hat\Sigma$ satisfies \eqref{EqPilot}, then under the
conditions of Theorem~\ref{cltorlle} the adaptive estimator~\eqref
{adlle} satisfies
%
\begin{equation}
\label{fcltad} \hat{\mathbf{I}}_n^{1/2} \biggl(
\operatorname{LMM}_{\mathrm{ad}}^{(n)} -\operatorname{vec} \biggl(\int
_0^1\Sigma(s) \,ds \biggr) \biggr)\stackrel{
\mathcal {L}} {\longrightarrow} \mathbf{N} (0,{\cal Z} ),
\end{equation}
with $\hat{\mathbf{I}}_n$ from \eqref{EqIhat}.

Moreover, Corollary~\ref{corclt} applies equally to the adaptive
estimator \eqref{adlle}.
\end{thmm}

Since the estimated $\hat{\mathbf I}_n$ appears in the CLT, we have
obtained a feasible limit theorem and (asymptotic) inference statements
are immediate.

Some assumptions of Theorem~\ref{cltadlle} are tighter than for the
oracle estimator. To some extent this is for the sake of clarity. Here,
we have restricted Assumption~\ref{smoothness}(ii-$\alpha$) to the
Besov-regular part. A generalisation of the pilot estimator to
martingales seems feasible, but is nonstandard and might require
additional conditions. We have also proposed a concrete order of $h$
and $r$, less restrictive bounds are used in the proof; see, for
example, \eqref{Eqdelta_n} below.

The lower bound for $\alpha$ in terms of the sample-size ratio
$n_{\mathrm{max}}/n_{\mathrm{min}}$ is due to rough norm bounds for (estimated)
information-type matrices. For $\alpha=1$ (bounded variation case), the
restriction imposes $n_{\mathrm{max}}$ to be slightly smaller than
$n_{\mathrm{min}}^{4/3}$. By the Sobolev embedding $B^1_{1,\infty}\subset
H^\beta
$ for all $\beta<1/2$, the restriction $n_{\mathrm{max}}=\KLEINO
(n_{\mathrm{min}}^{1+\beta
})$ from Theorem~\ref{theo1} is clearly also satisfied in this case.
It is not clear whether a more elaborate analysis can avoid these
restrictions. Still, to the best of our knowledge, a feasible CLT for
asymptotically separating sample sizes has not been obtained before.


\section{Semi-parametric Cram\'er--Rao bound}\label{sec:4}

We shall derive an efficiency bound for the following basic case of
observation model \eqref{E1}:
%
\begin{equation}
\label{CRmodelY} dY_t=X_t \,dt+\frac{1}{\sqrt{n}}
\,dW_t, \qquad X_t=\int_0^t
\Sigma(s)^{1/2}\,dB_s,\qquad  t\in[0,1],
\end{equation}
where
%
\begin{equation}
\label{CRmodelX} \Sigma(t)=\Sigma_{0}(t)+\eps\mathds{H}(t), \qquad\Sigma
_{0}(t)^{1/2}=O(t)^\top\Lambda(t)O(t).
\end{equation}
We assume $\Sigma_0(t)$ and $\mathds{H}(t)$ to be known symmetric
matrices, $O(t)$ orthogonal matrices, $\Lambda(t)=\diag(\lambda
_1(t),\ldots,\lambda_d(t))$ diagonal and consider $\eps\in[-1,1]$ as
unknown parameter. Furthermore, we require Assumption~\ref
{smoothness}(iii-$\underline\Sigma$) for all $\Sigma$. Finally, we
impose throughout this section the regularity assumption that the
matrix functions $O(t),\mathds{H}(t),\Lambda(t)$ are continuously
differentiable.

The key idea is to transform the observation of $dY_t$ in such a manner
that the white noise part remains invariant in law while for the
central parameter $\Sigma(t)=\Sigma_0(t)$ the process $X$ is
transformed to a process with independent coordinates and constant
volatility. It turns out that this can only be achieved at the cost of
an additional drift in the signal. The construction first rotates the
observations via $O(t)$, which diagonalises $\Sigma_0(t)$, and then
applies a coordinate-wise time-transformation, corrected by a
multiplication term to ensure $L^2$-isometry such that the white noise
remains law-invariant. All proofs are delegated to the supplementary
file \cite{supplement}.

We introduce the coordinate-wise time changes by
\[
r_i(t)=\frac{\int_0^t\lambda_i(s)\,ds} {\int_0^1\lambda_i(s)\,ds}
\quad\mbox{and} \quad(T_rg) (t):=
\bigl(g_1\bigl(r_1(t)\bigr),\ldots,g_d
\bigl(r_d(t)\bigr)\bigr)^\top
\]
for $g=(g_1,\ldots,g_d)\dvtx\R\to\R^d$. Moreover, we set
\[
\bar\Lambda:=\int_0^1 \Lambda(s)\,ds,\qquad
R'(t):=\bar\Lambda ^{-1}\Lambda (t)=\diag
\bigl(r_1'(t),\ldots,r_d'(t)
\bigr).
\]

\begin{lem}\label{LemTrafo}
By transforming $d\bar Y=T_r^{-1}{\cal M}_{(R')^{-1/2}O}\,dY$, the
observation model \eqref{CRmodelY}, \eqref{CRmodelX} is equivalent to
observing
%
\begin{equation}
\label{EqTrafo}d\bar Y(t)=S(t) \,dt+\frac{1}{\sqrt{n}}\,d\bar W(t)
\end{equation}
with
\begin{eqnarray*}
S(t)&=&T_r^{-1} \biggl(\bigl(R'
\bigr)^{-1} \biggl(\int_0^\cdot\bigl(
\bigl(R'\bigr)^{-1/2}O\bigr)'(s)X(s) \,ds \\
&&\hspace*{61pt}{}+ \int
_0^\cdot\bigl(R'(s)
\bigr)^{-1/2}O(s) \,dX(s) \biggr) \biggr) (t)
\end{eqnarray*}
for $t\in[0,1]$. At $\eps=0$ the observation $d\bar Y(t)$ reduces to
%
\begin{equation}\quad
\label{EqCentral} \biggl(\int_0^tT_r^{-1}
\bigl(\bigl(R'\bigr)^{-1}\bigl(\bigl(R'
\bigr)^{-1/2}O\bigr)'X\bigr) (s) \,ds+\bar\Lambda \bar B(t)
\biggr) \,dt +\frac{1}{\sqrt{n}}\,d\bar W(t).
\end{equation}
Here $\bar W$ and $\bar B$ are Brownian motions
obtained from $W$ and $B$, respectively, via rotation and time shift.
\end{lem}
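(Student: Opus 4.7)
The plan is to verify the transformation by explicit computation, handling the signal and noise parts of $dY$ separately with Itô calculus. Setting $A(s):=(R'(s))^{-1/2}O(s)$, the pointwise application of $\mathcal{M}_{(R')^{-1/2}O}$ to $dY=X\,dt+n^{-1/2}dW$ produces $A(s)X(s)\,ds+n^{-1/2}A(s)\,dW(s)$. For the signal part I would apply Itô's product rule to the $C^1$ deterministic matrix $A$ and the continuous martingale $X$ with $X(0)=0$,
\[
A(s)X(s)=\int_0^s A'(u)X(u)\,du+\int_0^s A(u)\,dX(u),
\]
so that the signal after $\mathcal{M}_{(R')^{-1/2}O}$ can be expressed as the $s$-derivative of the sum of these two integrals, which is precisely the bracketed expression in the definition of $S(t)$ in \eqref{EqTrafo}.

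Next I would apply $T_r^{-1}$ coordinate-wise. Because $(T_r^{-1}f)_i(t)=f_i(r_i^{-1}(t))$, differentiating in $t$ via the chain rule produces the Jacobian $(r_i^{-1})'(t)=1/r_i'(r_i^{-1}(t))=[(R'(r_i^{-1}(t)))^{-1}]_{ii}$. Inserting this into the derivative of the time-changed integrated signal furnishes the external $(R')^{-1}$ matrix factor in $S(t)$. For the noise $n^{-1/2}A\,dW$, the orthogonality $OO^\top=E_d$ gives $AA^\top=(R')^{-1}$, so after $\mathcal{M}$ the noise is a continuous centered Gaussian martingale with diagonal covariation rate $n^{-1}(R'(s))^{-1}$; the coordinate-wise time change then balances this with the Jacobian so that the process $\bar W$ defined through \eqref{EqWBbar} is a continuous Gaussian martingale with identity covariation rate and orthogonal coordinates, and Lévy's characterisation (or Dambis--Dubins--Schwarz) identifies it as a standard $d$-dimensional Brownian motion.

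For the central case $\eps=0$, I would plug $\Sigma_0(u)^{1/2}=O(u)^\top\Lambda(u)O(u)$ into the second integral and use the diagonal identity $(R')^{-1/2}=\bar\Lambda^{1/2}\Lambda^{-1/2}$ together with $OO^\top=E_d$ to establish the pivotal algebraic simplification
\[
(R'(u))^{-1/2}O(u)\Sigma_0(u)^{1/2}=\bar\Lambda^{1/2}\Lambda(u)^{1/2}O(u),
\]
so that $\int_0^s(R')^{-1/2}O\,dX=\bar\Lambda^{1/2}\int_0^s\Lambda^{1/2}O\,dB$. Its $i$-th coordinate is a continuous martingale with quadratic variation $\bar\Lambda_{ii}\int_0^s\lambda_i(u)\,du=\bar\Lambda_{ii}^2 r_i(s)$, with distinct coordinates orthogonal again by $OO^\top=E_d$. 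Composing with $T_r^{-1}$ gives a continuous martingale in $t$ of coordinate-wise quadratic variation $\bar\Lambda_{ii}^2 t$, which by Dambis--Dubins--Schwarz equals $\bar\Lambda_{ii}\bar B_i(t)$ for standard Brownian motions $\bar B_i$; this yields the $\bar\Lambda\bar B(t)$ summand in \eqref{EqCentral}. The first integral $\int_0^\cdot A'(u)X(u)\,du$ retains its nontrivial dependence on $X$ and remains as the other summand in its $T_r^{-1}$-transformed form.

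The main obstacle is the careful Jacobian bookkeeping when commuting $T_r^{-1}$ with Lebesgue integration against $du$ and with stochastic integration against $dX$ and $dW$; the external $(R')^{-1}$ factor inside $S(t)$ emerges only once the outer derivative in $t$ of the time-changed integrals is evaluated. A closely related subtlety is identifying $\bar W$ and $\bar B$ as genuine standard Brownian motions, which depends crucially on the precise form of the multiplier $(R')^{-1/2}O$: the left rotation by $O$ decouples the coordinates of the $X$-martingale at $\eps=0$ through the match with $\Sigma_0^{1/2}=O^\top\Lambda O$, while the scaling by $(R')^{-1/2}$ is exactly calibrated against the coordinate-wise time changes $r_i^{-1}$ so that the resulting quadratic variations become linear in $t$.
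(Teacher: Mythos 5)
Your route is essentially the paper's: decompose $(R')^{-1/2}OX$ by the product rule into the Lebesgue and stochastic integrals appearing in $S$, push the time change through with Jacobian bookkeeping, and at $\eps=0$ use $(R')^{-1/2}O\Sigma_0^{1/2}=\bar\Lambda^{1/2}\Lambda^{1/2}O$ (equivalently the paper's $(R')^{-1/2}\Lambda(R')^{-1/2}=\bar\Lambda$) together with the quadratic-variation computation $\bar\Lambda_{ii}\int_0^{r_i^{-1}(t)}\lambda_i=\bar\Lambda_{ii}^2t$ to identify the martingale part as $\bar\Lambda\bar B(t)$. All of these algebraic ingredients check out, and your $\eps=0$ analysis is if anything more explicit than the paper's.

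The one point where your plan stops short of a proof is exactly the obstacle you name but do not resolve: you apply the time change by three different ad hoc rules to the three differentials --- with a Jacobian $(r_i^{-1})'$ for $X\,dt$, as a plain time change of the integrated process for $dX$, and via an unexplained ``balancing'' for $dW$. Taken literally, the pathwise time change of $\int_0^\cdot (R')^{-1/2}O\,dW$ has quadratic variation $\int_0^{r_i^{-1}(u)}(R'_{ii})^{-1}(s)\,ds\neq u$, so it is \emph{not} the standard Brownian motion of \eqref{EqWBbar}, which carries the multiplier $(R')^{+1/2}$ inside the integral; the sign of that exponent has to flip somewhere, and ``the Jacobian balances it'' does not explain how, since time-changing a martingale introduces no Jacobian in its bracket. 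The paper closes this gap by a single consistent device: it notes that ${\cal M}_{(R')^{1/2}}T_r$ is an $L^2$-isometry, hence $T_r^{-1}{\cal M}_{(R')^{-1/2}O}=T_r^\ast{\cal M}_{(R')^{1/2}O}$ is unitary, and then \emph{defines} the transformed observation by duality, $\int\scapro{g}{d\bar Y}:=\int\scapro{O^\top(R')^{1/2}T_rg}{dY}$, carrying out the product rule and integration by parts on the deterministic test function $g$. This simultaneously produces the correct $(R')^{1/2}$ in \eqref{EqWBbar}, preserves the white noise exactly (unitarity), and yields the drift of \eqref{EqCentral} without any case-by-case Jacobian argument. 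To complete your proof you would need either this duality step or an equivalent careful statement of how the operator acts on each type of differential; as written, the consistency of your three rules is asserted rather than established.
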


If we may forget in \eqref{EqCentral} the first term, which is a drift
term with respect to the martingale part $\bar\Lambda\bar B(t)$, then
the central observation is indeed a constant volatility model in white noise.

Let us introduce the multiplication operator ${\cal M}_Ag:=Ag$ and
the integration operator $Ig(t)=-\int_t^1g(s) \,ds$ and its adjoint
$I^\ast g(t)=-\int_0^tg(s) \,ds$.
The covariance operator $C_{n,\eps}$ on $L^2([0,1],\R^d)$ obtained from
observing the differential in \eqref{EqTrafo} is then given by
\[
C_{n,\eps}=T_r^\ast{\cal M}_{(R')^{1/2}O}
I^\ast{\cal M}_{\Sigma
_0+\eps\mathds{H}} I {\cal M}_{O^\top(R')^{1/2}}T_r+n^{-1}
\Id.
\]
The covariance operator $Q_{n,\eps}$ when omitting the drift part is
given by
\[
Q_{n,\eps}=Q_{n,0}+\eps I^\ast T_r^\ast{
\cal M}_{M} T_r I\qquad\mbox{with } M(t):=\bigl(
\bigl(R'\bigr)^{-1/2}O\mathds{H}O^\top
\bigl(R'\bigr)^{-1/2}\bigr) (t),
\]
where for $\eps=0$ the one-dimensional Brownian motion covariance
operator $C_{\mathrm{BM}}=I^\ast I$ appears in
$Q_{n,0}=\diag(\bar\lambda_{ii}C_{\mathrm{BM}}+n^{-1}\Id)_{1\le i\le d}$.

Standard calculations for the finite-dimensional Gaussian scale model,
for example, \cite{LehmannCasella}, Chapter~6.6, transfer one-to-one
to the
infinite-dimensional case of observing $\mathbf{N}(0,Q_{n,\eps})$ and
yield as Fisher information for the parameter $\eps$ at $\eps=0$ the value
$I_n^Q=\frac{1}2 \Vert Q_{n,0}^{-1/2}\dot Q_{0}Q_{n,0}^{-1/2} \Vert_{\mathrm{HS}}^2 $
because $Q_{n,0}^{-1/2}Q_{n,\eps}Q_{n,0}^{-1/2}$ is differentiable at
$\eps=0$ in Hilbert--Schmidt norm. We show by Hilbert--Schmidt
calculus, the Feldman--Hajek theorem and the Girsanov theorem that the
models with and without drift do not separate:
%
\begin{equation}
\label{LemmaNoDrift} \limsup_{n\to\infty}\bigl\Vert Q_{n,0}^{-1/2}
\dot Q_0 Q_{n,0}^{-1/2} - C_{n,0}^{-1/2}
\dot C_0 C_{n,0}^{-1/2} \bigr\Vert_{\mathrm{HS}}<
\infty.
\end{equation}
Consequently, the drift only contributes the negligible order $\mathcal
{O}(1)=\KLEINO(\sqrt{n})$ to the Fisher information.
Analysing $\mathbf{N}(0,Q_{n,\eps})$, we thus establish a
semi-parametric Cram\'er--Rao bound for estimating any linear
functional of the co-volatility matrix.

\begin{thmm}\label{ThmCR}
For a continuous matrix-valued function $A\dvtx[0,1]\to\R^{d\times d}$
consider the estimation of
%
\begin{equation}
\label{EqFunctional} \theta:=\int_0^1\bigl\langle
A(t),\Sigma(t) \bigr\rangle_{\mathrm{HS}} \,dt =\int_0^1
\sum_{i,j=1}^dA_{ij}(t)
\Sigma_{ij}(t) \,dt\in\R.
\end{equation}
Then a hardest parametric subproblem in model \eqref{CRmodelY}, \eqref
{CRmodelX} is obtained for the perturbation of $\Sigma_0$ by
\[
\mathds{H}^\ast(t)=\bigl(\Sigma_0 \bigl(A+A^\top
\bigr)\Sigma_0^{1/2}+\Sigma_0^{1/2}
\bigl(A+A^\top\bigr)\Sigma_0\bigr) (t).
\]
There any estimator $\hat\theta_n$ of $\theta$, which is asymptotically
unbiased in the sense $\frac{d}{d\theta}(\E_\theta[\hat\theta
_n]-\theta
)\to0$, satisfies as $n\to\infty$
\begin{eqnarray*}
&&\Var_{\eps=0}(\hat\theta_n)\\
&&\qquad\ge\frac{(2+\KLEINO(1))}{\sqrt
{n}}\int
_0^1 \bigl\langle\bigl(\Sigma_0
\otimes\Sigma_0^{1/2}+\Sigma_0^{1/2}
\otimes \Sigma _0\bigr){\cal Z}\operatorname{vec}(A),{\cal Z}\operatorname{vec}(A) \bigr
\rangle(t)\,dt.
\end{eqnarray*}
\end{thmm}

Further classical efficiency statements like the local asymptotic
minimax theorem would require the LAN-property of the parametric subproblem.

\section{Implementation and numerical results}
\label{sec:5}

\subsection{Discrete-time estimator}
The construction to transfer discrete-time to continuous-time
observations in the proof of Theorem~\ref{theo1} paves the way to the
discrete approximation of the local spectral statistics \eqref{spec}.
Using the interpolated process and integration by parts yields
\[
\int\phi_{jk}(t)\,dY^{(l)}(t)\asymp-\sum
_{\nu=1}^{n_l} \int_{t_{\nu
-1}^{(l)}}^{t_{\nu}^{(l)}}
\Phi_{jk}(t)\frac{Y_{\nu}^{(l)}-Y_{\nu
-1}^{(l)}}{t_{\nu}^{(l)}-t_{\nu-1}^{(l)}} \,dt.
\]
Hence, for discrete-time observations from \eqref{E0} we use the local
spectral statistics $S_{jk}$ in \eqref{specdis}.
The noise terms in \eqref{localnorm} translate from $\mathcal{E}_1$ to
$\mathcal{E}_0$ via substituting $n_l^{-1}\int_{kh}^{(k+1)h}(F_l^{\prime
}(s))^{-1} \,ds$ by $\sum_{\nu: kh\le t_{\nu}^{(l)}\le(k+1)h}(t_{\nu
}^{(l)}-t_{\nu-1}^{(l)})^2$. The discrete sum times $h^{-1}$ can be
understood as a block-wise quadratic variation of time in the spirit of
Zhang {et al.} \cite{zhangmykland}. The bias is discretised
analogously. In theory and practice, frequencies $j$ larger than $\log
(\eta_p^{-1}n^{1/2})$ can be cut off as the size of the weights $W_j$
decays rapidly for $j\to\infty$.
Different constants in the choice of the block size $h$ do not cause a
finite-sample bias, unless the volatility oscillates rapidly over time
(in a nonmartingale fashion).

For the adaptive estimator we are in need of local estimates of
$n_lF_l^{\prime}$, $\Sigma$ and estimators for $\eta_l^2,1\le l\le d$.
It is well known how to estimate noise variances with faster $\sqrt
{n_l}$-rates; see, for example, Zhang {et al.} \cite{zhangmykland}.
Local observation densities can be estimated with block-wise quadratic
variation of time
as above, which then yield estimates $\hat H_{n,l}^{kh}$ of $H_{n,l}$
around time $kh$. Uniformly consistent estimators for $\Sigma(t),t\in
[0,1]$, are feasible, for example, averaging spectral statistics for
$j=1,\ldots,J$ over a set ${\mathcal K}_t$ of $K$ adjacent blocks
containing $t$:
%
\begin{equation}
\label{Pilot} \hat{\Sigma }(t)=K^{-1}\sum
_{k\in{\mathcal K}_t} J^{-1}\sum_{j=1}^J
\bigl(S_{jk}S_{jk}^\top-\pi^2
j^2h^{-2}\diag\bigl(\bigl(\hat H_{n,l}^{kh}
\bigr)_l^2\bigr) \bigr).
\end{equation}
We refer to Bibinger and Rei{ss} \cite{bibingerreiss} for details on
the nonparametric pilot estimator with $J=1$.

\begin{figure}

\includegraphics{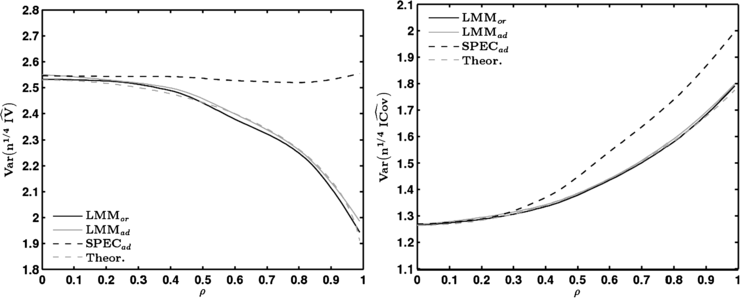}

\caption{Variances of estimators of
$\sigma_{1}^2$ (left) and $\sigma_{12}$ (right)
in time-constant scenario.}
\label{PAR_AVAR_V1}\vspace*{-3pt}
\end{figure}

\subsection{Simulations}
We examine the finite-sample properties of the LMM for the case $d=2$
in two scenarios. First, we compare the finite-sample variance with the
asymptotic variances from Sections \ref{sec:2} and \ref{sec:3}, for a parametric setup with
$\eta_1^2=\eta_2^2=0.1$, $\sigma_1=\sigma_2=1$ and constant correlation
$\rho$. We simulate $n_1=n_2=30\mbox{,}000$ synchronous observations on
$[0,1]$. For estimating $\sigma_{1}^2$ and $\sigma_{12}=\rho$,
Figure~\ref{PAR_AVAR_V1} displays the rescaled Monte-Carlo variance
based on
$20\mbox{,}000$ replications of the\vadjust{\goodbreak} oracle and adaptive LMM (LMM$_{\mathrm{or}}$ and
LMM$_{\mathrm{ad}}$), as well as the adaptive spectral estimator (SPEC$_{\mathrm{ad}}$)
by Bibinger and Rei{ss} \cite{bibingerreiss}. The latter relies on the
same spectral approach, but uses only scalar weighting instead of the
full information matrix approach.

In practice, the pilot estimator from \eqref{Pilot} for $J$ not too
large performed well. As configuration we use $h^{-1}=10$, $J=30$ and
$K=8$, which turned out to be an accurate choice,
but the estimators are reasonably robust to alternative input choices.
For the LMM of $\sigma_{1}^2$, we observe the variance reduction effect
associated with a growing signal correlation $\rho$, while the
simulation-based variances of both LMM$_{\mathrm{or}}$ and LMM$_{\mathrm{ad}}$ are close
to their theoretical asymptotic counterpart (Theor.). The results for
$\sigma_{12}$ underline the precision gains compared to SPEC$_{\mathrm{ad}}$
with univariate weights when $\rho$ increases.

Next, we consider a complex and realistic stochastic volatility setting
that relies on an extension of the widely-used Heston model as, for
example, employed by A\"{i}t-Sahalia {et al.} \cite{aitfanxiu2010},
accounting for both leverage effects and an intraday seasonality of
volatility. The signal process for $l=1,2$ evolves as\looseness=-1
\[
\label{sim_setting} dX^{(l)}_t=\phi_l (t )
\sigma_l (t ) \,d Z_t^{(l)},\qquad  d
\sigma^2_l (t )=\alpha_l \bigl(
\mu_l-\sigma^2_l (t ) \bigr)\,dt +
\psi_l \sigma_l (t ) \,d V_t^{(l)},
\]\looseness=0
where $Z_t^{(l)}$ and $V_t^{(l)}$ are standard Brownian motions with
$dZ_t^{(1)} \,dZ_t^{(2)}=\rho \,dt$ and $dZ_t^{(l)} \,dV_t^{(m)}=\delta
_{l,m} \gamma_l \,dt$. $\phi_l  (t )$ is a nonstochastic
seasonal factor with $\int_0^1 \phi_l^2  (t ) \,dt=1$.
The unit time interval can represent one trading day, for example, 6.5
hours or 23,400 seconds at NYSE.

We initialise the variance process $\sigma^2_l  (t )$ by
sampling from its stationary distribution $\Gamma
 (2
\alpha_l \mu_l/\psi_l^2,\psi_l^2/(2\alpha_l) )$ and vary the value
of the instantaneous signal correlation $\rho$, while setting $
(\mu
_l,\alpha_l,\psi_l,\gamma_l )= (1,6,0.3,-0.3 )$, $l=1,2$,
which under the stationary distribution, implies \mbox{$\E [\int_0^1 \phi_l^2  (t )\sigma^2_l  (t ) \,dt ]=1$}.
The seasonal factor $\phi_l
 (t )$ is specified in terms of intraday volatility functions
estimated for S\&P 500 equity data by the procedure in Andersen and
Bollerslev~\cite{andbol1997}. $\phi_1  (t )$ and $\phi_2
(t )$ are based on cross-sectional averages of the 50 most and 50\vadjust{\goodbreak}
least liquid stocks, respectively, which yields a pronounced L-shape in
both cases (see Figure~\ref{intraday_vol_rmse}). We add noise processes
that are i.i.d. $\mathbf{N}  (0,\eta_l^2 )$ and mutually
independent with
$\eta_l=0.1(\E [\int_0^1 \phi_l^4  (t )\sigma^4_l
(t ) \,dt ])^{1/4}$, computed under the stationary distribution
of $\sigma^2_l
 (t )$. Finally, asynchronicity effects are introduced by
drawing observation times $t_i^{(l)}$, $1\leq i\leq n_l$, $l=1,2$, from
two independent Poisson processes with intensities $\lambda_1=1$ and
$\lambda_2=2/3$ such that, on average, $n_1=23\mbox{,}400$ and $n_2=15\mbox{,}600$.

\begin{figure}

\includegraphics{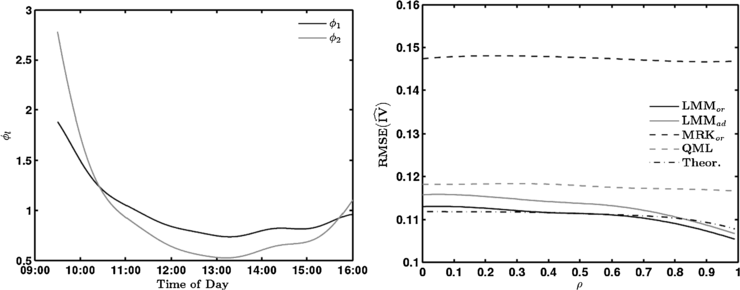}

\caption{Nonstochastic volatility
seasonality factors (left) and RMSE for estimators of
$\int_0^1 \phi_1^2  (t )\sigma^2_1  (t ) \,dt$
(right) in stochastic volatility scenario.}\label{intraday_vol_rmse}
\end{figure}

As a representative example, Figure~\ref{intraday_vol_rmse} \,depicts the
root mean-squared errors (RMSEs) based on $40\mbox{,}000$ replications of the
following estimators of $\int_0^1 \phi_1^2  (t )\sigma^2_1
 (t ) \,dt$: the oracle and adaptive LMM using $h^{-1}=20$,
$J=15$ and $K=8$, the quasi-maximum likelihood (QML) estimator by A\"
{i}t-Sahalia {et al.} \cite{aitfanxiu2010} as well as an oracle
version of the widely-used multivariate realised kernel (MRK$_{\mathrm{or}}$) by
Barndorff-Nielsen {et al.} \cite{{bn2011}}. For the latter, we
employ the average univariate mean-squared error optimal bandwidth
based on the true value of $\int_0^1 \phi_l^4  (t )\sigma
^4_l
 (t ) \,dt$, $l=1,2$. Finally, we include the theoretical
variance from the asymptotic theory (Theor.), which is computed as the
variance \eqref{avar} averaged across all replications.

Three major results emerge. First, the LMM offers considerable
precision gains when compared to both benchmarks. Second, a rising
instantaneous signal correlation $\rho$ is associated with a declining
RMSE of the LMM, which is due to the decreasing variance, and thus
confirms the findings from Section~\ref{sec:2} in a realistic setting. Finally,
the adaptive LMM closely tracks its oracle counterpart.

In summary, the simulation results show that the estimator has
promising properties even in settings which are more general than those
assumed in \eqref{E1}, allowing, for instance, for random observation
times, stochastic intraday volatility as well as leverage effects. Even
if the latter effects are not yet covered by our theory, the proposed
estimator seems to be quite robust to deviations from the idealised
setting.\vadjust{\goodbreak}


\begin{appendix}\label{app}
\section{From discrete to continuous experiments}\label{appa}
\begin{pf*}{Proof of Theorem~\ref{theo1}}
To establish Le Cam equivalence, we give a constructive proof to
transfer observations in $\mathcal{E}_0$ to the continuous-time model
$\mathcal{E}_1$ and the other way round. We bound the Le Cam distance
by estimates for the squared Hellinger distance between Gaussian
measures and refer to Section {A.1} in \cite{reiss} for
information on Hellinger distances between Gaussian measures and bounds
with the Hilbert--Schmidt norm. The crucial difference here is that
linear interpolation is carried out for nonsynchronous irregular
observation schemes.
Consider the linear B-splines or hat functions
\[
b_{i,n}(t)=\1_{[{(i-1)}/{n}, {(i+1)}/{n}]}(t) \min{ \biggl(1+n \biggl(t-
\frac{i}{n} \biggr), 1-n \biggl(t-\frac{i}{n} \biggr) \biggr)}.
\]
Define $b_i^l(t):=b_{i,n_l}(F_l(t)),1\le i\le n_l,1\le l\le d$, which
are warped spline functions satisfying $b_{i_1}^l(t^{(l)}_{i_2})=\delta
_{i_1,i_2}$. A centered Gaussian process $\hat{Y}$ is derived from
linearly interpolating each component of $Y$:
%
\begin{equation}
\hat{Y}_t^{(l)} = \sum_{i=1}^{n_l}{Y}_{i}^{(l)}
b_i^{l}(t)=\sum_{i=1}^{n_l}X_{t_i^{(l)}}^{(l)}b_i^{l}(t)+
\sum_{i=1}^{n_l}\mathbf {\eps
}^{(l)}_i b_i^{l}(t).
\end{equation}
Setting $A(t)=(a_{lr}(t))_{l,r=1,\ldots,d}=\int_0^t\Sigma(s) \,ds$, the
covariance matrix function $\E[\hat{Y}_t{\hat{Y}_s}^{\top}]$ of the
interpolated process $\hat{Y}$ is determined by
\[
\E \bigl[\hat{Y}^{(l)}_t{\hat{Y}^{(r)}_s}
\bigr]=\sum_{i=1}^{n_l}\sum
_{\nu=1}^{n_r}a_{lr}\bigl(t_{i}^{(l)}
\wedge t_{\nu}^{(r)}\bigr)b_{i}^l(t)b_{\nu
}^r(s)+
\delta_{l,r}\eta_l^2\sum
_{i=1}^{n_l} b_i^l(t)b_i^l(s).
\]
For any $g=(g^{(1)},\ldots,g^{(d)})^{\top} \in L^2([0,1],\mathds
{R}^d)$, we have in the $L^2$-scalar product
\[
\E \bigl[\langle{g},\hat{Y}\rangle^2 \bigr]=\sum
_{l,r=1}^d\sum_{i=1}^{n_l}
\sum_{\nu=1}^{n_r}a_{lr}
\bigl(t_i^{(l)}\wedge t_{\nu
}^{(r)}\bigr)
\bigl\langle g^{(l)},b_i^l\bigr\rangle \bigl
\langle g^{(r)},b_{\nu}^r\bigr\rangle +\sum
_{l=1}^d\sum_{i=1}^{n_l}
\bigl\langle g^{(l)},b_i^l\bigr
\rangle^2\eta_l^2.
\]
The sum of the addends induced by the observation noise in diagonal
terms is bounded from above by
$\sum_{l=1}^d\frac{\eta_l^2}{n_l}\|g^{(l)}/\sqrt{F_l'}\|_{L^2}^2
=\sum_{l=1}^d\|g^{(l)}H_{n,l}\|_{L^2}^2$
since by virtue of $0\le\sum_ib_{i,n}\le1$, $\int b_{i,n}=1/n$ and
Jensen's inequality:
\begin{eqnarray*}
\sum_{i=1}^{n_l}\bigl\langle
g^{(l)},b_i^l\bigr\rangle^2 &\le&
\frac{1}{n_l}\sum_{i=1}^{n_l}\int
_0^1 \bigl(\bigl(g^{(l)}\circ
F_l^{-1}\bigr)\cdot \bigl(F_l^{-1}
\bigr)'\bigr)^2b_{i,n_l}
\\
&\le&\frac{1}{n_l}\int_0^1 \bigl(
\bigl(g^{(l)}\circ F_l^{-1}\bigr)\cdot
\bigl(F_l^{-1}\bigr)' \bigr)^2=
\frac{1}{n_l}\int_0^1 \frac{(g^{(l)})^2}{F_l'}.
\end{eqnarray*}
On the other hand, we have $\E[\langle g,\diag(H_{n,l})_l\, dW\rangle
]=\sum_{l=1}^d
\|g^{(l)}H_{n,l}\|_{L^2}^2$
for a $d$-dimensional standard Brownian motion $W$. Consequently, a
process $\bar{Y}$ with continuous-time\vadjust{\goodbreak} white noise and the same signal
part as $\hat{Y}$ can be obtained by adding uninformative noise.
Introduce the process
%
\begin{equation}
\label{iwn} d\bar{Y}= \Biggl(\sum_{i=1}^{n_l}X_{t_i^{(l)}}b_i^l(t)
\Biggr)_{1\le l\le
d} \,dt +\diag\bigl(H_{n,l}(t)\bigr)_{1\le l\le d}\,dW_t,
\end{equation}
and its associated covariance operator $\bar C\dvtx L^2\rightarrow L^2$,
given by
\[
\bar C g(t)= \Biggl( \sum_{r=1}^d\sum
_{i=1}^{n_l}\sum
_{\nu
=1}^{n_r}a_{lr}\bigl(t_{i}^{(l)}
\wedge t_{\nu}^{(r)}\bigr)\bigl\langle g^{(r)},b_{\nu}^r
\bigr\rangle \Biggr)_{1\le l\le d} + \bigl(H_{n,l}(t)^2g^{(l)}(t)
\bigr)_{1\le l\le d}.
\]
In fact, it is possible to transfer observations from our original
experiment $\mathcal{E}_0$ to observations of \eqref{iwn} by adding
$\mathbf{N}(0,\bar C-\hat C)$-noise, where $\hat C\dvtx L^2\rightarrow L^2$
is the covariance operator of $\hat Y$.
Now, consider the covariance operator
\[
Cg(t)=\int_0^1\int_0^{t\wedge u}A(s)
\,ds\, g(u) \,du + \biggl(\frac{\eta_l^2}{n_lF_l'(t)}g^{(l)}(t) \biggr)_{1\le l\le d},
\]
associated with the continuous-time experiment $\mathcal{E}_1$.

We can bound $C^{-1/2}$ on $L^2([0,1],\R^d)$ from below (by partial
ordering of operators) by a simple matrix multiplication operator:
$C^{-1/2}\le{\cal M}_{\diag(H_{n,l}(t))_l}$.
Denote the Hilbert--Schmidt or Frobenius norm by $\|
\cdot\|_{\mathrm{HS}}$.
The asymptotic equivalence of observing $\bar{Y}$ and $Y$ in $\mathcal
{E}_1$ is ensured by the Hellinger distance bound
\begin{eqnarray*}
&&\operatorname{H}^2 \bigl(\mathcal{L} (\bar{Y} ), \mathcal{L} (Y )
\bigr)\\
&&\qquad\le2 \bigl\|C^{-{1}/{2}}(\bar C-C)C^{-{1}/{2}}\bigr\|^2_{\mathrm{HS}}
\\
&&\qquad\le 2 \int_0^1\int_0^1
\Biggl(\sum_{l=1}^d\sum
_{r=1}^d H_{n,l}(t)^{-2}H_{n,r}(t)^{-2}
\\
&&\quad\qquad\hspace*{40pt}{} \times\Biggl(\sum_{i=1}^{n_l}\sum
_{\nu=1}^{n_r}a_{lr}\bigl(t_{i}^{(l)}
\wedge t_{\nu
}^{(r)}\bigr)b_{i}^l(t)b_{\nu}^r(s)-a_{lr}(t
\wedge s) \Biggr)^2 \Biggr)\,dt \,ds
\\
&&\qquad= 2 \int_0^1\int_0^1
\Biggl(\sum_{l=1}^d\sum
_{r=1}^d\frac{n_ln_r}{\eta_l^2\eta
_r^2}
\\
&&\hspace*{40pt}\qquad\quad{}\times \Biggl(\sum_{i=1}^{n_l}\sum
_{\nu=1}^{n_r}a_{lr}\bigl(t_{i}^{(l)}
\wedge t_{\nu}^{(r)}\bigr)b_{i,n_l}(u)b_{\nu,n_r}(z)\\
&&\hspace*{140pt}{}-a_{lr}
\bigl(F_l^{-1}(u)\wedge F_r^{-1}(z)
\bigr) \Biggr)^2 \Biggr) \,du \,dz
\\
&&\qquad={\mathcal O} \Biggl(R^4\sum_{l=1}^d
\sum_{r=1}^d\eta_l^{-2}
\eta _r^{-2}n_ln_rn_{\mathrm{min}}^{-2-2\beta}
\Biggr).
\end{eqnarray*}
The estimate for the $L^2$-distance between the function $(t,s)\mapsto
A(F_l^{-1}(t)\wedge F_r^{-1}(s)),(l,r)\in\{1,\ldots,d\}^2$, and its
coordinate-wise linear interpolation by $\mathcal{O}(n_{\mathrm{min}}^{-1-\beta
}\vee n_{\mathrm{min}}^{-3/2})$ relies on a standard approximation result on a
rectangular grid of maximal width $(n_{\mathrm{min}})^{-1}$ based on the fact
that this function lies in the Sobolev class $H^{1+\beta}([0,1]^2)$
with corresponding norm bounded by $2R^4$. This follows immediately by
the product rule from $A'=\Sigma\in H^\beta$ and $(F_l^{-1})'\in
C^\beta
$, together with an $L^2$-error bound at the skewed diagonal $\{
(t,s)\dvtx F_l(t)=F_r(s)\}$.

Next, we explicitly show that $\mathcal{E}_1$ is at least as
informative as $\mathcal{E}_0$. To this end, we discretise in each
component on the intervals $I_{i,l}=[\frac{i}{n_l}-\frac
{1}{2n_l},\frac
{i}{n_l}+\frac{1}{2n_l}]\cap[0,1]$ for $i=0,\ldots,n_l$.
Define
%
\begin{eqnarray}\qquad
\bigl(Y_i^{\prime} \bigr)^{(l)}&=&\frac{1}{|I_{i,l}|}
\int_{F_l^{-1}(I_{i,l})}F_l^{\prime}(t)
\,dY^{(l)}_{t} =\frac
{1}{|I_{i,l}|}\int_{F_l^{-1}(I_{i,l})}X_t^{(l)}F_l^{\prime}(t)
\,dt+\epsilon_i^{(l)}
\nonumber
\\[-8pt]
\\[-8pt]
\nonumber
&=&\frac{1}{|I_{i,l}|}\int_{I_{i,l}}X_{F^{-1}(u)}^{(l)}
\,du+\epsilon _i^{(l)},
\end{eqnarray}
%
with i.i.d. $\mathbf{N}(0,\eta_l^2)$-random variables $\epsilon
_i^{(l)}=\frac
{1}{|I_{i,l}|}\int_{F_l^{-1}(I_{i,l})}\eta_l(F_l^{\prime}/n_l)^{1/2}
\,dW_t^{(l)}$.
The covariances are calculated as
\[
\E \bigl[\bigl(Y_i^{\prime}\bigr)^{(l)}
\bigl(Y_{\nu
}^{\prime}\bigr)^{(r)} \bigr] =
\frac{1}{|I_{i,l}||I_{\nu,r}|} \int_{I_{i,l}} \int_{I_{\nu,r}}
a_{lr}\bigl(F_l^{-1}(u) \wedge
F_r^{-1}\bigl(u'\bigr)\bigr)
\,du\,du' + \delta _{l,r}\delta_{i,\nu}
\eta_l^2.
\]
We obtain for the squared Hellinger distance between the laws of observation
\begin{eqnarray*}
&&\operatorname{H}^2 \bigl(\mathcal{L} \bigl(\bigl(Y_i^{(l)}
\bigr)_{l=1,\ldots,d;i=0,\ldots,n_l} \bigr), \mathcal{L} \bigl(\bigl(\bigl(Y_i^{\prime}
\bigr)^{(l)}\bigr)_{l=1,\ldots,d;i=0,\ldots,n_l} \bigr) \bigr)
\\
&&\qquad\le \sum_{l,r=1}^d\eta_l^{-2}
\eta_r^{-2}\sum_{i=0}^{n_l}
\sum_{\nu=0}^{n_r} \biggl(\frac{1}{|I_{i,l}||I_{\nu,r}|}
\int_{I_{i,l}} \int_{I_{\nu,r}} a_{lr}
\bigl(F_l^{-1}(u)\wedge F_r^{-1}
\bigl(u'\bigr)\bigr)\\
&&\hspace*{170pt}{}-a_{lr}\bigl(F_l^{-1}(i/n_l
\wedge\nu/n_r)\bigr) \,du\,du' \biggr)^2.
\end{eqnarray*}
Write $A^F_{lr}(u,u')=a_{lr}(F_l^{-1}(u)\wedge F_r^{-1}(u'))$ and note
$A^F_{lr}\in H^{1+\beta}([0,1]^2)$ due to $A'=\Sigma\in H^\beta$ and
$F_l^{-1},F_r^{-1}\in C^\beta$.
For $(i,\nu)\notin{\cal C}:=\{(0,0),(0,n_r),(n_l,0),$ $(n_l,n_r)\}
$ the rectangle $I_{i,l}\times I_{\nu,r}$ is symmetric around\vadjust{\goodbreak}
$(i/n_l,\nu/n_r)$ such that the integral in the preceding display
equals ($\nabla$ denotes the gradient)
\begin{eqnarray*}
&&\int_{I_{i,l}\times I_{\nu,r}} \int_0^1
\biggl( \biggl\langle\nabla A^F_{lr}\biggl(\frac
{i}{n_l}+
\theta \biggl(u-\frac{i}{n_l}\biggr),\frac
{\nu
}{n_r}+\theta
\biggl(u'-\frac{\nu}{n_r}\biggr) \biggr), \\[-1pt]
&&\hspace*{160pt}\qquad{}\biggl(u-
\frac{i}{n_l},u'-\frac{\nu}{n_r}\biggr)\biggr\rangle
\\[-1pt]
&&\hspace*{72pt}\qquad{}-
\biggl\langle\nabla A^F_{lr}\biggl(
\frac{i}{n_l},\frac{\nu
}{n_r}\biggr), \biggl(u-\frac{i}{n_l},u'-
\frac{\nu}{n_r}\biggr)\biggr\rangle \biggr) \,d\theta \,du\,du'.
\end{eqnarray*}
Using Jensen's inequality, we thus obtain further the bound for the
squared Hellinger distance:
\begin{eqnarray*}
&&\hspace*{-4pt}\sum_{l,r=1}^d\eta_l^{-2}
\eta_r^{-2}\sum_{i=0}^{n_l}
\sum_{\nu=0}^{n_r} \frac{(n_l\vee n_r)^{-2}}{|I_{i,l}||I_{\nu,r}|}\\[-1pt]
&&\hspace*{-6pt}\qquad{}\times\int
_{I_{i,l}\times
I_{\nu,r}} \int_0^1\bigl\|\nabla
A^F_{lr}\bigl(i/n_l+\theta(u-i/n_l),
 \nu/n_r+\theta\bigl(u'-\nu/n_r\bigr)
\bigr)\\[-1pt]
&&\hspace*{153pt}{}-\nabla A^F_{lr}(i/n_l,
\nu/n_r)\1 \bigl((i,\nu )\notin{\cal C}\bigr)\bigr\|^2 \,d\theta
\,du\,du'
\\[-1pt]
&&\hspace*{-6pt}\qquad=\sum_{l,r=1}^d \eta_l^{-2}
\eta_r^{-2}\frac
{n_ln_r}{(n_l\vee n_r)^{2}}\mathcal{O}
\bigl(R^4(n_l\wedge n_r)^{-2\beta
}
\bigr) \\[-1pt]
&&\hspace*{-6pt}\qquad= \mathcal{O} \Biggl( R^4 \Biggl(\sum
_{l=1}^dn_l/\eta_l^2
\Biggr)^2 n_{\mathrm{min}}^{-2-2\beta} \Biggr),
\end{eqnarray*}
where the order estimate is due to $\|\nabla A^F_{lr}\|_{H^\beta}\le
R^2$ and a standard $L^2$-approxi\-mation result for Sobolev spaces,
observing that for the four corner rectangles in $\cal C$ the
boundedness of the respective integrals only adds the total order
$4n_{\mathrm{min}}^{-2}< n_ln_rn_{\mathrm{min}}^{-2-2\beta}$.
\end{pf*}\vspace*{-7pt}

\section{Asymptotics in the block-wise constant experiment}\label{appb}\vspace*{-3pt}

\begin{pf*}{Proof of Theorem~\ref{cltorlle}}
As we have seen, the estimator is unbiased in $\mathcal{E}_2$. For the
covariance structure we use the independence between blocks and
frequencies and the commutativity with $\cal Z$ to infer
%
\begin{eqnarray}\label{variance}
&&\Cov_{\mathcal{E}_2} \bigl({\mathbf I}_n^{1/2}
\operatorname{LMM}_{\mathrm{or}}^{(n)} \bigr)\nonumber\\[-1pt]
&&\qquad= {\mathbf
I}_n^{1/2}\sum_{k=0}^{h^{-1}-1}h^2
\sum_{j=1}^{\infty}W_{jk}
\Cov_{\mathcal{E}_2} \bigl(\operatorname{vec} \bigl(S_{jk}S_{jk}^{\top}
\bigr) \bigr)W_{jk}^\top{\mathbf I}_n^{1/2}
\\[-1pt]
&&\qquad= {\mathbf I}_n^{1/2}\sum
_{k=0}^{h^{-1}-1}h^2I_k^{-1}{
\mathbf I}_n^{1/2}{\cal Z}={\cal Z}.\nonumber
\end{eqnarray}
Since the local Fisher-type information matrices are strictly positive definite,
and thus invertible by Assumption~\ref{smoothness}(iii), the
multivariate CLT \eqref{fclt} for the oracle estimator follows by
applying a standard CLT for triangular schemes as Theorem~4.12 from
\cite{kallenberg}. The Lindeberg condition is implied by the stronger
Lyapunov condition which is easily verified here by bounding moments of
order $4$.

In Appendix \ref{AppC} below, we prove that in experiment ${\mathcal
E}_1$ the estimator $\operatorname{LMM}_{\mathrm{or}}^{(n)}$ has an additional
bias of order $\mathcal{O}(n_{\mathrm{min}}^{-\alpha/2})+\mathcal{O}_P(h)$
and a
difference in the covariance of order $\mathcal{O}(hn_{\mathrm{min}}^{-\alpha
/2})+\mathcal{O}_P(h^2)$ under our Assumption~\ref
{smoothness}(ii-$\alpha$), (iii-$\underline\Sigma$), which by
Slutsky's lemma yields an asymptotically negligible term compared to
the best attainable rate (in any entry) $n_{\mathrm{max}}^{-1/4}$; cf.
Theorem~\ref{ThmCR}.
\end{pf*}

\begin{pf*}{Proof of Corollary~\ref{corclt}}
An important property of our oracle estimator is its equi-variance with
respect to invertible linear transformations $A_k$ on each block $k$ in
the sense that for observed statistics $\tilde S_{jk}:=A_kS_{jk}\sim
\mathbf{N}(0,\tilde C_{jk})$ under $\mathcal{E}_2$ we obtain
[$A^{-\top
}:=(A^\top)^{-1}$ for short]
\begin{eqnarray*}
C_{jk}&=&A_k^{-1}\tilde C_{jk}A_k^{-\top},\qquad
I_{jk}=(A_k\otimes A_k)^\top
\tilde I_{jk} (A_k\otimes A_k),\\
I_k&=&(A_k\otimes A_k)^\top\tilde
I_{k} (A_k\otimes A_k)
\end{eqnarray*}
and hence with some (deterministic) bias correction terms
$B_{jk},\tilde B_{jk}$
\begin{eqnarray*}
\operatorname{LMM}_{\mathrm{or}}^{(n)}&=&\sum
_{k=0}^{h^{-1}-1} h(A_k\otimes
A_k)^{-1} \tilde I_k^{-1} \sum
_{j\ge0} \tilde I_{jk}(A_k
\otimes A_k) \operatorname{vec}\bigl(S_{jk}S_{jk}^\top
-B_{jk}\bigr)
\\
&=&\sum_{k=0}^{h^{-1}-1} (A_k\otimes
A_k)^{-1} \biggl(h\tilde I_k^{-1}
\sum_{j\ge0} \tilde I_{jk} \operatorname{vec}\bigl(\tilde
S_{jk}\tilde S_{jk}^\top -\tilde
B_{jk}\bigr) \biggr).
\end{eqnarray*}
For the covariance, we use commutativity with $\cal Z$ and obtain likewise
%
\begin{equation}
\label{Covtilde} \Cov_{\mathcal
{E}_2}\bigl(\operatorname{LMM}_{\mathrm{or}}^{(n)}
\bigr)=\sum_{k=0}^{h^{-1}-1} h^2(A_k
\otimes A_k)^{-1}\tilde I_k^{-1}(A_k
\otimes A_k)^{-\top}{\cal Z}.
\end{equation}
We use this property to diagonalise the problem on each block.
In terms of the noise level matrix ${\cal H}_k:=\diag
(H_{l,n}^k)_{l=1,\ldots,d}$, let $O_k$ be an orthogonal matrix such that
%
\begin{equation}
\label{pca2} \Lambda^{kh}=O_k\mathcal{H}_k^{-1}
\Sigma^{kh}\mathcal {H}_k^{-1}O_k^\top
\end{equation}
is diagonal. Note that $\Lambda^{kh}$ grows with $n$, but we drop the
dependence on $n$ in the notation for all matrices $\Lambda^{kh}$,
$O_k$ and ${\cal H}_k$. Use $A_k=O_k\mathcal{H}_k^{-1}$ to obtain the
spectral statistics \eqref{spec} transformed:
\[
\tilde S_{jk}=O_k\mathcal{H}_k^{-1}S_{jk}
\sim\mathbf{N} (\mathbf {0},\tilde C_{jk} )\qquad \mbox{independent for all }
(j,k),
\]
which yields a simple-structured diagonal covariance matrix:
\[
\tilde C_{jk}=O_k\mathcal{H}_k^{-1}C_{jk}
\mathcal{H}_k^{-1}O_k^{\top} =
\Lambda^{kh}+\frac{\pi^2j^2}{h^2}E_d.
\]
A key point is that the covariance structure \eqref{Covtilde} in $\R
^{d^2\times d^2}$ is for independent components $\tilde S_{jk}$ also
diagonal, up to symmetry in the co-volatility\break  matrix entries. 
Summing\vadjust{\goodbreak}
$\tilde I_{jk}$ over $j$ is explicitly solvable and gives\break  for
\mbox{$p,q=1,\ldots,d$}
\begin{eqnarray*}
\bigl(h\tilde I_k^{-1}\bigr)_{p,q}&=&
\Biggl(h^{-1}\sum_{j=1}^\infty\bigl(
\tilde C_{jk}^{-1}\otimes\tilde C_{jk}^{-1}
\bigr)_{p,q} \Biggr)^{-1}
\\
&=& \Biggl(h^{-1}\sum_{j=1}^\infty
\bigl(\Lambda_{pp}^{kh}+\pi^2j^2h^{-2}
\bigr)^{-1} \bigl(\Lambda_{qq}^{kh}+
\pi^2j^2h^{-2}\bigr)^{-1}
\Biggr)^{-1}
\\
&=& \biggl( \frac{\sqrt{\Lambda_{qq}^{kh}}\coth(h\sqrt{\Lambda_{pp}^{kh}})
-\sqrt{\Lambda_{pp}^{kh}}\coth(h\sqrt{\Lambda_{qq}^{kh}})}{
2\sqrt{\Lambda_{pp}^{kh}\Lambda_{qq}^{kh}} (\Lambda
_{qq}^{kh}-\Lambda
_{pp}^{kh})} - \frac{1}{2h\Lambda_{pp}^{kh}\Lambda_{qq}^{kh}} \biggr)^{-1}
\\
&=& 2\Bigl(\Lambda_{pp}^{kh}\sqrt{
\Lambda_{qq}^{kh}}+\Lambda _{qq}^{kh}
\sqrt {\Lambda_{pp}^{kh}}\Bigr) \\
&&{}\times\bigl(1+\mathcal{O}
\bigl(e^{-2h\sqrt{\Lambda_{pp}^{kh}\wedge\Lambda
_{qq}^{kh}}}+h^{-1}\bigl(\Lambda_{pp}^{kh}
\wedge \Lambda_{qq}^{kh}\bigr)^{-1/2} \bigr) \bigr),
\end{eqnarray*}
using $\Lambda^{kh}\ge(\min_{l,t}n_lF_l'(t)\eta_l^{-2})\underline
\Sigma
\gtrsim n_{\mathrm{min}}E_d$, $h^2n_{\mathrm{min}}\to\infty$ and $\coth(x)=1+\mathcal
{O}(e^{-2x})$ for $x\to\infty$. We thus obtain uniformly over $k$
\[
h\tilde I_k^{-1}=\bigl(2+\KLEINO(1)\bigr) \bigl(
\Lambda^{kh}\otimes\sqrt{\Lambda^{kh}} + \sqrt{
\Lambda^{kh}} \otimes\Lambda^{kh}\bigr).
\]
By formula \eqref{Covtilde}, we infer in terms of $(\Sigma_{\cal
H}^{kh})^{1/2}:={\cal H}_k({\cal H}_k^{-1}\Sigma^{kh}{\cal
H}_k^{-1})^{1/2}{\cal H}_k$
\[
\Cov_{\mathcal{E}_2}\bigl(\operatorname{LMM}_{\mathrm{or}}^{(n)}
\bigr)=\bigl(2+\KLEINO (1)\bigr)\sum_{k=0}^{h^{-1}-1}
h\bigl(\Sigma^{kh}\otimes\bigl(\Sigma_{\cal H}^{kh}
\bigr)^{1/2} + \bigl(\Sigma_{\cal H}^{kh}
\bigr)^{1/2} \otimes\Sigma^{kh}\bigr) {\cal Z}.
\]
The final step consists in combining $n_{\mathrm{min}}^{1/2}H_{n,l}(t)\to
H_l(t)$ uniformly in $t$ together with a Riemann sum approximation to conclude
\begin{eqnarray*}
&&\lim_{n_{\mathrm{min}}\to\infty}n_{\mathrm{min}}^{1/2}
\Cov_{\mathcal
{E}_2}\bigl(\operatorname {LMM}_{\mathrm{or}}^{(n)}\bigr)
\\
&&\qquad =2 \biggl(\int_0^1 \bigl(\Sigma\otimes
\bigl({\cal H}\bigl({\cal H}^{-1}\Sigma{\cal H}^{-1}
\bigr)^{1/2}{\cal H}\bigr)\\
&&\hspace*{59pt}{} + \bigl({\cal H}\bigl({\cal H}^{-1}
\Sigma{\cal H}^{-1}\bigr)^{1/2}{\cal H}\bigr) \otimes\Sigma
\bigr) (t) \,dt \biggr) {\cal Z}.
\end{eqnarray*}
\upqed\end{pf*}

\section{Proofs for continuous models}\label{AppC}

\subsection{Weight matrix estimates}

We shall often need general norm bounds on the weight matrices $W_{jk}$.

\begin{lem}\label{LemWjk}
$\!\!\!$The oracle weight matrices satisfy $\Vert W_{jk} \Vert\lesssim
h_0^{-1}(1+j^4/h_0^4)^{-1}$ uniformly over $(j,k)$ and matrices $\Sigma
^{kh}$ with $\Vert\Sigma^{kh} \Vert_\infty+\Vert(\Sigma
^{kh})^{-1} \Vert_\infty
\lesssim1$.
\end{lem}

\begin{pf}
From the proof of Corollary~\ref{corclt}, we infer
\[
W_{jk}=\bigl(H_kO_k^\top\otimes
H_kO_k^\top\bigr)\tilde W_{jk}\bigl(O_k
H_k^{-1}\otimes O_k H_k^{-1}\bigr)\vadjust{\goodbreak}
\]
with
\[\tilde W_{jk}=\bigl(2+\KLEINO(1)\bigr)h^{-1} \bigl(\bigl(
\Lambda^{kh}\tilde C_{jk}^{-1}\bigr)\otimes\bigl(
\sqrt{\Lambda^{kh}}\tilde C_{jk}^{-1}\bigr)+\bigl(
\sqrt {\Lambda^{kh}}\tilde C_{jk}^{-1}\bigr)\otimes
\bigl(\Lambda^{kh}\tilde C_{jk}^{-1}\bigr) \bigr).
\]
We evaluate one factor in $W_{jk}$ using
\[
\bigl\Vert H_kO_k^\top\Lambda^{kh}
\tilde C_{jk}^{-1}O_k H_k^{-1}
\bigr\Vert =\bigl\Vert\Sigma^{kh}\bigl(\Sigma^{kh}+
\pi^2j^2h^{-2}H_k^2
\bigr)^{-1} \bigr\Vert \lesssim \bigl(1+j^2h^{-2}n_{\mathrm{min}}^{-2}
\bigr)^{-1}.
\]
By $\Vert A\otimes B \Vert\le\Vert A \Vert\Vert B \Vert$ and
$\sqrt{\Lambda^{kh}}\tilde C_{jk}^{-1}=(\Lambda^{kh}\tilde
C_{jk}^{-1})(\Lambda^{kh})^{-1/2}$ (the matrices are diagonal), we infer
$\Vert W_{jk} \Vert\lesssim h^{-1}(1+j^2h_0^{-2})^{-2}
\Vert H_kO_k^\top(\Lambda^{kh})^{-1/2} O_k H_k^{-1} \Vert$.
To evaluate the last norm, despite matrix multiplication is
noncommutative, we note
\begin{eqnarray*}
\bigl(O_k^\top\bigl(\Lambda^{kh}
\bigr)^{-1/2} O_k H_k^{-1}
\bigr)^\top O_k^\top\bigl(\Lambda^{kh}
\bigr)^{-1/2} O_k H_k^{-1}&=&
H_k^{-1}O_k^\top \bigl(\Lambda
^{kh}\bigr)^{-1}O_kH_k^{-1}
\\
&=& \bigl(\Sigma ^{kh}\bigr)^{-1},
\end{eqnarray*}
whence by polar decomposition $\vert O_k^\top(\Lambda^{kh})^{-1/2}
O_k H_k^{-1} \vert=(\Sigma^{kh})^{-1/2}$ implies
\[
\bigl\Vert O_k^\top\bigl(\Lambda^{kh}
\bigr)^{-1/2} O_k H_k^{-1}\bigr \Vert=\bigl\Vert
\bigl(\Sigma ^{kh}\bigr)^{-1/2} \bigr\Vert\lesssim1.
\]
Together with $\Vert H_k \Vert\lesssim n_{\mathrm{min}}^{-1/2}$ this yields
$\Vert W_{jk} \Vert\lesssim h^{-1}(1+j^2h_0^{-2})^{-2}n_{\mathrm{min}}^{-1/2}$,
which gives the result.
\end{pf}

Moreover, for the adaptive estimator we have to control the dependence
of the weight matrices $W_{jk}=W_j(\Sigma^{kh})$ on $\Sigma^{kh}$.
We use the notion of matrix differentiation as introduced in \cite
{fackler}: define the derivative $dA/dB$ of a matrix-valued function
$A(B)\in\mathds{R}^{o\times p}$ with respect to $B\in\mathds
{R}^{q\times r}$ as the $\mathds{R}^{op\times qr}$ matrix with row
vectors $(d/dB_{ab})\operatorname{vec}(A),1\le a\le q,1\le b\le r$.

\begin{lem}\label{derivative}
For the derivatives of the oracle weight matrices $W_{j}(\Sigma^{kh})$,
assuming $\|\Sigma^{kh}\|_{\infty}+\|(\Sigma^{kh})^{-1}\|_{\infty
}\lesssim1$, we have uniformly over $(j,k)$:
%
\begin{equation}
\biggl\llVert \frac{d}{d\Sigma^{kh}}W_{j}\bigl(\Sigma ^{kh}
\bigr)\biggr\rrVert \lesssim h_0^{-1}\bigl(1+j^4h_0^{-4}
\bigr)^{-1}.
\end{equation}
\end{lem}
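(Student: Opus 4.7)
The plan is to apply the product and chain rules of matrix calculus to the factorisation $W_j(\Sigma^{kh})=I_\infty(\Sigma^{kh})^{-1}I_j(\Sigma^{kh})$, with $I_j(\Sigma):=C_j(\Sigma)^{-1}\otimes C_j(\Sigma)^{-1}$, $C_j(\Sigma):=\Sigma+\pi^2 j^2 h^{-2}\mathcal{H}_k^2$, and $I_\infty(\Sigma):=\sum_{m\ge 1}I_m(\Sigma)$ (suppressing the block index $k$). The operator norm of the Jacobian $dW_j/d\Sigma$ is equivalent, up to a factor depending only on $d$, to the operator norm of the Fréchet differential, so I will bound $\|dW_j\|/\|d\Sigma\|$ uniformly in $j$, $k$, and admissible $\Sigma^{kh}$. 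The product rule yields
\begin{equation*}
dW_j = -I_\infty^{-1}(dI_\infty)I_\infty^{-1}I_j + I_\infty^{-1}dI_j,\qquad dI_\infty=\sum_{m\ge 1}dI_m,
\end{equation*}
with $dI_j = (dC_j^{-1})\otimes C_j^{-1}+C_j^{-1}\otimes(dC_j^{-1})$ and $dC_j^{-1}=-C_j^{-1}(d\Sigma)C_j^{-1}$.

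The required norm ingredients are already implicit in the proofs of Lemma \ref{LemWjk} and Corollary \ref{corclt}: from $C_j\ge\Sigma\gtrsim E_d$ and $C_j\ge \pi^2 j^2 h^{-2}\mathcal{H}_k^2\gtrsim (j/h_0)^2 E_d$ one obtains $\|C_j^{-1}\|\lesssim h_0^2/(h_0^2+j^2)$, whence $\|I_j\|\lesssim h_0^4/(h_0^2+j^2)^2$, and the diagonalisation in the proof of Corollary \ref{corclt} (showing $h\tilde I_k^{-1}\lesssim \Lambda\otimes\sqrt\Lambda+\sqrt\Lambda\otimes\Lambda$ with $\|\Lambda^{kh}\|\lesssim n_{min}$, together with $\|A_k^{-1}\|=\|\mathcal{H}_k\|\lesssim n_{min}^{-1/2}$) gives $\|I_\infty^{-1}\|\lesssim h_0^{-1}$. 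Consequently $\|dC_j^{-1}\|\le\|C_j^{-1}\|^2\|d\Sigma\|$ and, using $\|A\otimes B\|=\|A\|\|B\|$,
\begin{equation*}
\|dI_j\|\le 2\|C_j^{-1}\|^3\|d\Sigma\|\lesssim h_0^6/(h_0^2+j^2)^3\,\|d\Sigma\|.
\end{equation*}
The second summand in $dW_j$ is then benign, since $\|I_\infty^{-1}dI_j\|\lesssim h_0^5/(h_0^2+j^2)^3\|d\Sigma\|$ already obeys the target bound.

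The main obstacle is the first summand, whose control hinges on a uniform bound for $\|dI_\infty\|$ that exploits the decay of $\|dI_m\|$ in $m$. Splitting the sum at $m\approx h_0$ gives
\begin{equation*}
\sum_{m\ge 1}\|dI_m\|\lesssim\sum_{m\ge 1}\frac{h_0^6}{(h_0^2+m^2)^3}\|d\Sigma\|\lesssim h_0\|d\Sigma\|,
\end{equation*}
since the $\mathcal O(h_0)$ terms with $m\le h_0$ contribute $\mathcal O(1)$ each, while the tail $m>h_0$ is controlled by $h_0^6\int_{h_0}^\infty x^{-6}\,dx\lesssim h_0$. Plugging this in,
\begin{equation*}
\|I_\infty^{-1}(dI_\infty)I_\infty^{-1}I_j\|\lesssim h_0^{-2}\cdot h_0\cdot\frac{h_0^4}{(h_0^2+j^2)^2}\|d\Sigma\|=\frac{h_0^3}{(h_0^2+j^2)^2}\|d\Sigma\|,
\end{equation*}
which dominates the second summand. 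Using $(h_0^2+j^2)^2\ge h_0^4+j^4$ finally bounds $\|dW_j\|/\|d\Sigma\|$ by $h_0^3/(h_0^4+j^4)=h_0^{-1}(1+j^4/h_0^4)^{-1}$, matching the growth recorded in Lemma \ref{LemWjk} and proving the claim.
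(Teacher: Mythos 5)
Your decomposition is the same as the paper's --- product rule for $W_j=I_k^{-1}I_{jk}$, chain rule for $I_{jk}=C_{jk}^{-1}\otimes C_{jk}^{-1}$ --- but the way you estimate the resulting products creates a gap. Every bound you use is obtained by submultiplicativity, $\norm{I_k^{-1}\,dI_{jk}}\le\norm{I_k^{-1}}\,\norm{dI_{jk}}$ and so on, so the decay in $j$ must be carried entirely by your claim $\norm{C_{jk}^{-1}}\lesssim h_0^2/(h_0^2+j^2)$. That claim requires $C_{jk}\ge \pi^2 j^2h^{-2}{\cal H}_k^2\gtrsim (j/h_0)^2E_d$, i.e. $\min_l (H^{kh}_{n,l})^2\gtrsim n_{min}^{-1}$, which holds only when all sample sizes are of order $n_{min}$. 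The lemma, however, is invoked in the proof of Theorem \ref{cltadlle} uniformly in the regime where $n_{max}/n_{min}\to\infty$: a component with $n_l=n_{max}$ contributes a noise eigenvalue $\pi^2j^2h^{-2}(H^{kh}_{n,l})^2\asymp j^2h_0^{-2}(n_{min}/n_{max})$, so $\norm{C_{jk}^{-1}}\asymp 1$ persists for all $j\lesssim h_0(n_{max}/n_{min})^{1/2}$. Correcting your intermediate bounds accordingly gives $\sum_m\norm{dI_m}\lesssim (n_{max}/n_{min})^{1/2}h_0\norm{d\Sigma}$, and your first summand then picks up an extra factor $(n_{max}/n_{min})^{1/2}$ over the claimed bound, which is therefore not recovered.

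The paper's proof avoids this by never separating $I_k^{-1}$ from the $I_{j'k}$'s: both addends are rearranged so that $I_k^{-1}$ stays attached to some $I_{j'k}$, producing $W_{j'k}$, and the only additional ingredients are $\norm{C_{jk}^{-1}}\lesssim1$ (from $C_{jk}\ge\underline\Sigma$) and $\sum_{j'}\norm{W_{j'k}}\lesssim1$ (from Lemma \ref{LemWjk}). The point is that the directions in which $C_{jk}^{-1}$ decays slowly in $j$ (small noise, large $n_l$) are precisely the directions in which $I_k^{-1}$ is small; this cancellation, which Lemma \ref{LemWjk} captures through the polar-decomposition step $\norm{O_k^\top(\Lambda^{kh})^{-1/2}O_kH_k^{-1}}=\norm{(\Sigma^{kh})^{-1/2}}\lesssim1$, is lost the moment you bound $\norm{I_k^{-1}I_{jk}}$ by $\norm{I_k^{-1}}\,\norm{I_{jk}}$. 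Your argument is correct and complete in the comparable-sample-size setting of Corollary \ref{corclt} ($n_{min}/n_p\to\nu_p>0$); to cover the general case you should estimate the products $I_k^{-1}I_{j'k}$ directly via Lemma \ref{LemWjk} rather than re-deriving decay of $\norm{C_{jk}^{-1}}$, or else restrict the statement to $n_{max}\asymp n_{min}$.
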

\begin{pf}
Since the notion of matrix derivatives relies on vectorisation, the
identities $\operatorname{vec}(I_k^{-1}I_{jk})=(E_{d^2}\otimes
I_k^{-1})\operatorname{vec}(I_{jk})=(I_{jk}^{\top}\otimes E_{d^2})\operatorname{vec}(I_k^{-1})$
give rise to the matrix differentiation product rule
%
\begin{equation}
\label{pr}\frac{d}{d\Sigma^{kh}} W_{jk}= (I_{jk}\otimes
E_{d^2} )\frac{dI_k^{-1}}{d\Sigma^{kh}}+ \bigl(E_{d^2}\otimes
I_k^{-1} \bigr)\frac{dI_{jk}}{d\Sigma^{kh}}.
\end{equation}
Applying the mixed product rule $(A\otimes B)(C\otimes D)=(AC\otimes
BD)$ repeatedly, and the differentiation product rule and chain rule to
$I_{jk}= C_{jk}^{-1}\otimes C_{jk}^{-1}$, we obtain
\begin{eqnarray*}
&&\frac{d}{d C_{jk}} \bigl( C_{jk}^{-1}\otimes
C_{jk}^{-1} \bigr)\\[-2pt]
&&\qquad =- \bigl( \bigl( C_{jk}^{-1}
\otimes C_{jk}^{-1} \bigr)\otimes \bigl( C_{jk}^{-1}
\otimes C_{jk}^{-1} \bigr) \bigr)\\[-2pt]
&&\qquad\quad{}\times \bigl( \bigl((
C_{jk}\otimes E_d \otimes E_{d^2})
+(E_{d^2}\otimes E_{d}\otimes C_{jk})
\bigr) (E_d\otimes C_{d,d}\otimes E_d) \\[-2pt]
&&\hspace*{100pt}\qquad\quad{}\times\bigl(
\bigl( \operatorname{vec}(E_{d})\otimes E_{d^2}\bigr)+ \bigl(E_{d^2}
\otimes \operatorname{vec}(E_{d})\bigr) \bigr) \bigr),
\end{eqnarray*}
with the so-called commutation matrix $C_{d,d}=\mathcal{Z}-E_{d^2}$. By
orthogonality of the last factors in both addends, $\|A\otimes B\|=\|A\|
\|B\|$, and the mixed product rule, we infer for the norm of the second
addend in \eqref{pr}
\begin{eqnarray*}
\biggl\llVert \bigl(E_{d^2}\otimes I_k^{-1}\bigr)
\frac
{dI_{jk}}{d\Sigma
^{kh}}\biggr\rrVert &\le&2 \bigl\llVert \bigl(E_d\otimes
C_{jk}^{-1} \bigr)\otimes \bigl(I_k^{-1}
\bigl( C_{jk}^{-1}\otimes C_{jk}^{-1}
\bigr) \bigr)\bigr\rrVert
\\[-2pt]
&=& 2 \llVert W_{jk}\rrVert \bigl\llVert C_{jk}^{-1}
\bigr\rrVert \lesssim\llVert W_{jk}\rrVert.
\end{eqnarray*}
By virtue of
$(I_k^{-1}\otimes E_{d^2})\frac{dI_k}{d\Sigma^{kh}}=-(E_{d^2}\otimes
I_k)\frac{dI_k^{-1}}{d\Sigma^{kh}}$
it follows with the mixed product rule that $dI_k^{-1}/d\Sigma
^{kh}=-(I_k^{-1}\otimes I_k^{-1})(dI_k/d\Sigma^{kh})$. This yields for
the norm of the first addend in \eqref{pr}
\begin{eqnarray*}
\biggl\|(I_{jk}\otimes E_{d^2})\frac
{dI_k^{-1}}{d\Sigma
^{kh}} \biggr\|&=& \biggl\|
\bigl(W_{jk}^{\top}\otimes I_k^{-1}
\bigr)\frac
{dI_k}{d\Sigma
^{kh}} \biggr\|\lesssim\|W_{jk}\| \biggl\|\bigl(E_{d^2}
\otimes I_k^{-1}\bigr)\sum_{j^{\prime}}
\frac{dI_{j^{\prime}k}}{d\Sigma^{kh}} \biggr\|
\\[-2pt]
&\lesssim&\|W_{jk}\| \biggl(\sum_{j^{\prime}}
\|W_{j^{\prime}k}\| \biggr) \lesssim\|W_{jk}\|
\end{eqnarray*}
since we can differentiate inside the sum by the absolute convergence
of $\sum_{j^{\prime}}\|W_{j^{\prime}k}\|$. This proves our claim by
Lemma~\ref{LemWjk}.\vspace*{-2pt}
\end{pf}

\subsection{Bias bound}\label{SecBias}

Using the formula $1-2\sin^2(x)=\cos(2x)$ and It\^{o} isometry, the
$(d\times d)$-matrix of (negative) biases (in the signal) of the
addends in \eqref{orlle}
as an estimator of $\Sigma^{kh}$ in experiment $\mathcal{E}_1$ is
given by
\[
B_{j,k}:=2h^{-1} \int_{kh}^{(k+1)h}
\Sigma (t)\cos\bigl(2j\pi h^{-1}(t-kh)\bigr) \,dt, 
\]
which has the structure of a $j$th Fourier cosine coefficient. We
introduce the corresponding weighting function in the time domain:
\[
G_k(u)=2\sum_{j=1}^\infty
W_{jk}\cos(2j\pi u)\in\R^{d^2\times d^2},\qquad u\in[0,1].
\]
Parseval's identity then shows for the $d^2$-dimensional block-wise
bias vector of~\eqref{orlle}:
\[
\sum_{j=1}^\infty W_{jk}\operatorname{vec}(B_{j,k})
=
\int_{kh}^{(k+1)h} h^{-1}G_k
\bigl(h^{-1}(t-kh)\bigr)\operatorname{vec}\bigl(\Sigma(t)\bigr) \,dt.\vadjust{\goodbreak}
\]
The vector of total biases of \eqref{orlle} is then the linear
functional of $\Sigma$:
\[
\sum_{k=0}^{h^{-1}-1}h\sum
_{j=1}^\infty W_{jk}\operatorname{vec}(B_{jk}) =
\int_0^1 G^h(t)\operatorname{vec}\bigl(\Sigma(t)
\bigr) \,dt,
\]
where for $t\in[kh,(k+1)h)$
\[
G^h(t)=G_k\bigl(h^{-1}(t-kh)\bigr)=2\sum
_{j=1}^\infty W_{jk}\cos\bigl(2\pi
jh^{-1} t\bigr).
\]

For $\Sigma$ in the Besov space $B^{\alpha}_{1,\infty}([0,1])$,
$0<\alpha\le1$, the $L^1$-modulus of continuity satisfies $\omega
_{L^1([0,1])}(\Sigma,\delta)\le\Vert\Sigma \Vert_{B^\alpha
_{1,\infty
}}\delta
^\alpha$; see, for example, \cite{Cohen}, Section~3.2. We have for
$\delta\in(0,1)$ and $s\in[0,1-\delta]$
\begin{eqnarray*}
&&{\biggl\vert\int_0^{\delta} \operatorname{vec}\bigl(\Sigma(t+s)\bigr)
\cos\biggl(\frac{2\pi
t}{\delta }\biggr) \,dt\biggr \vert}\\
&&\qquad =\frac{1}{\delta}\biggl\llvert
\int_{0}^{\delta} \int_0^\delta
\operatorname{vec}\bigl(\Sigma (t+s)-\Sigma(u+s)\bigr)
 \,du\cos\biggl(\frac{2\pi t}{\delta}\biggr) \,dt\biggr\rrvert \\
 &&\qquad\le\sup
_{0\le
v\le\delta
}\int_0^\delta\bigl\vert \operatorname{vec}
\bigl(\Sigma(t+s)-\Sigma(t+v+s)\bigr) \bigr\vert \,dt \le
\omega _{L^1([s,s+\delta])}(\Sigma,
\delta).
\end{eqnarray*}
This shows for the total bias in estimation of the volatility in $X$ by
the bound on $\Vert W_{jk} \Vert$ in Lemma~\ref{LemWjk}
\begin{eqnarray*}
{\biggl\vert\int_0^1G^h(t)\operatorname{vec}\bigl(
\Sigma(t)\bigr) \,dt\biggr \vert} &\le&2\sum_{k=0}^{h^{-1}-1}
\sum_{j=1}^\infty\Vert W_{jk}
\Vert\omega_{L^1([kh,(k+1)h])}(\Sigma,h/j)
\\
&\lesssim&\sum_{j=1}^\infty
h_0^{-1}\bigl(1+(h_0/j)^4
\bigr)^{-1}(h/j)^\alpha \asymp(h/h_0)^\alpha=n_{\mathrm{min}}^{-\alpha/2}.
\end{eqnarray*}
We thus have a bias of order $\mathcal{O}(n_{\mathrm{min}}^{-\alpha/2})$. Remark
that it is quite surprising that this bias bound is independent of $h$,
which is also at the heart of the quasi-maximum likelihood method \cite
{aitfanxiu2010}.

If $\operatorname{vec}(\Sigma)$ is a (vector-valued) square-integrable martingale,
then we use that martingale differences are uncorrelated and write for
the total bias
\[
\int_0^1 G^h(t)\operatorname{vec}\bigl(
\Sigma(t)\bigr) \,dt=\int_0^1
G^h(t)\operatorname{vec}\bigl(\Sigma (t)-\Sigma \bigl(\bigl\lfloor
h^{-1}t \bigr\rfloor h\bigr)\bigr) \,dt,
\]
using $\int G_k=0$. This expression is centred with covariance matrix
\begin{eqnarray*}
&&\sum_{k=0}^{h^{-1}-1} \int_{[kh,(k+1)h]^2}G_k
\bigl(h^{-1}(t-kh)\bigr)\E \bigl[\operatorname{vec}\bigl(\Sigma(t)-\Sigma(kh)\bigr) \operatorname{vec}
\bigl(\Sigma(s)-\Sigma(kh)\bigr)^\top\bigr]\\
&&\hspace*{58pt}\qquad{}\times G_k\bigl(h^{-1}(s-kh)\bigr) \,dt\,ds.
\end{eqnarray*}
The expected value in the display is smaller than (in matrix ordering)
$\E[\operatorname{vec}(\Sigma((k+1)h)-\Sigma(kh))\operatorname{vec}(\Sigma((k+1)h)-\Sigma
(kh))^\top
]$. Because of $\Vert G_k \Vert_\infty\lesssim1$ the covariance
matrix (in
any norm) is of order $\mathcal{O}(h^2\E[\Vert\Sigma(1)-\Sigma (0)
\Vert^2])=\mathcal{O}(h^2)$.

If $\Sigma=\Sigma^B+\Sigma^M$ is the sum of a function $\Sigma^B$ in
$B^\alpha_{1,\infty}([0,1])$ and a square-integrable martingale
$\Sigma
^M$, then the preceding estimations apply for each summand and the
total bias has maximal order $\mathcal{O}(n_{\mathrm{min}}^{-\alpha
/2})+\mathcal
{O}_P(h)$.

\subsection{Variance for general continuous-time model}

The covariance for the estimator under model $\mathcal{E}_1$ can be
calculated as under model $\mathcal{E}_2$, but we lose independence
between different frequencies $j,j'$ on the same block. For that, we
use the formula for Gaussian random vectors $A,B$
\begin{eqnarray*}
&&\Cov\bigl(\operatorname{vec}\bigl(AA^\top\bigr),\operatorname{vec}
\bigl(BB^\top\bigr)
\bigr) \\
&&\qquad= \bigl(\Cov(B,B)\otimes\Cov(A,B)+\Cov(A,A)\otimes
\Cov(A,B)\\
&&\qquad\quad{}
+\Cov(A,B)\otimes\Cov(A,A) +\Cov(A,B)\otimes\Cov(B,B) \bigr){\cal
Z}/4,
\end{eqnarray*}
obtained by polarisation. This implies
\begin{eqnarray*}
&&\bigl\Vert\Cov_{\mathcal{E}_1}\bigl(\operatorname {LMM}_{\mathrm{or}}^{(n)}
\bigr) -\Cov_{\mathcal{E}_2}\bigl(\operatorname {LMM}_{\mathrm{or}}^{(n)}
\bigr) \bigr\Vert
\\
&&\qquad\lesssim\sum_{k=0}^{h^{-1}-1}h^2
\sum_{j,j'=1}^\infty\Vert W_{j'k} \Vert
\bigl\Vert W_{jk}\bigl(\Cov_{\mathcal{E}_1}(S_{jk},S_{jk})
\otimes\Cov _{\mathcal {E}_1}(S_{jk},S_{j'k})\bigr) \bigr\Vert.
\end{eqnarray*}
From Lemma~\ref{LemWjk} and $\Vert A\otimes B \Vert\le\Vert A \Vert
\Vert B \Vert$ for
matrices $A,B$, we infer that the series over $j,j'$ is bounded in
order by
\begin{eqnarray*}
&&\sum_{j,j'=1}^\infty h_0^{-2}
\bigl(1+j'/h_0\bigr)^{-4}(1+j/h_0)^{-2}\\
&&\qquad{}\times\biggl( \biggl\| \int_0^1 (\Sigma-\bar
\Sigma_h) (t)\frac{\Phi_{jk}(t)\Phi
_{j'k}(t)} {\Vert\Phi_{jk} \Vert_{L^2}\Vert\Phi_{j'k} \Vert
_{L^2}} \,dt \biggr\|\\
&&\hspace*{6pt}\qquad\quad{}+ \biggl\|\int_0^1\diag\bigl(H^2_{n,l}-
\overline{H}^2_{n,l,h}\bigr) (t) \phi _{jk}(t)
\phi_{j'k}(t) \,dt \biggr\| \biggr).
\end{eqnarray*}
The identities $2\cos(a)\cos(b)=\cos(a+b)+\cos(a-b)$, $2\sin
(a)\sin
(b)=\cos(a-b)-\cos(a+b)$ and the same bound as in Section~\ref{SecBias}
imply for $\Sigma, (F_1')^{-1},\ldots,\break (F_d')^{-1}\in B^\alpha
_{1,\infty
}([0,1])$ [note that even $(F_l')^{-1}\in C^\alpha([0,1])$]
\begin{eqnarray*}
&&\biggl\| \int_0^1 (\Sigma-\bar
\Sigma_h) (t)\frac{\Phi_{jk}(t)\Phi
_{j'k}(t)} {\Vert\Phi_{jk} \Vert_{L^2}\Vert\Phi_{j'k}
 \Vert_{L^2}} \,dt\biggr \|\\
&&\qquad\lesssim h^{-1}
\biggl(\frac{h}{j+j'}+\frac{h(1-\delta
_{j,j'})}{\vert j-j' \vert} \biggr)^\alpha \Vert\Sigma \Vert_{B^\alpha
_{1,\infty}([kh,(k+1)h])}
\end{eqnarray*}
and similarly the bound
\[
h^{-1}\biggl(\frac{h}{j+j'}+\frac{h(1-\delta
_{j,j'})}{\vert j-j' \vert}\biggr)^\alpha jj'h_0^{-2}\max_l\bigl\Vert
\bigl(F_l'\bigr)^{-1} \bigr\Vert_{B^\alpha_{1,\infty}([kh,(k+1)h])}
\]
for the norm over $H^2_{n,l}$. Putting all estimates together gives
\begin{eqnarray*}
&&\bigl\Vert\Cov_{\mathcal{E}_1}\bigl(\operatorname {LMM}_{\mathrm{or}}^{(n)}
\bigr) -\Cov_{\mathcal{E}_2}\bigl(\operatorname {LMM}_{\mathrm{or}}^{(n)}
\bigr)\bigr \Vert
\\
&&\qquad\lesssim h\sum_{j,j'=1}^\infty
h_0^{-2}\bigl(1+j'/h_0
\bigr)^{-4}(1+j/h_0)^{-2}h^\alpha
\bigl(1+\bigl\vert j-j' \bigr\vert \bigr)^{-\alpha}\bigl(1
+jj'h_0^{-2}\bigr).
\end{eqnarray*}
By comparison with $\int_0^\infty\int_0^\infty
(1+y)^{-4}(1+x)^{-2}\vert x-y \vert^{-\alpha}(1+xy) \,dx \,dy \lesssim1
$ (in terms of $x\approx j/h_0$, $y\approx j'/h_0$)
we conclude
\[
\bigl\Vert\Cov_{\mathcal{E}_1}\bigl(\operatorname{LMM}_{\mathrm{or}}^{(n)}
\bigr) -\Cov _{\mathcal{E}_2}\bigl(\operatorname{LMM}_{\mathrm{or}}^{(n)}
\bigr)\bigr \Vert\lesssim h n_{\mathrm{min}}^{-\alpha/2}.
\]
Arguing exactly as in Section~\ref{SecBias} for the case of $\Sigma$
being a sum of a $B^\alpha_{1,\infty}$-function and an
$L^2$-martingale, the difference of covariances is in general of order
$\mathcal{O}(h n_{\mathrm{min}}^{-\alpha/2})+\mathcal{O}_P(h^2)$.

\subsection{Proof of Theorem \texorpdfstring{\protect\ref{cltadlle}}{4.4}}\label{App:Adaptive}

Let us denote the rate of convergence of $\hat\Sigma$ by $\delta
_n=n_{\mathrm{min}}^{-\alpha/(4\alpha+2)}$. For later use, we note the order bounds
%
\begin{equation}
\label{Eqdelta_n} \qquad\delta_n=\KLEINO \bigl(r^{1/2}h_0^{-1/2}(n_{\mathrm{min}}/n_{\mathrm{max}})^{1/4}
\bigr), \qquad\delta_n=\KLEINO \bigl(h_0^{-1}(n_{\mathrm{min}}/n_{\mathrm{max}})^{1/2}
\bigr).
\end{equation}
First, we show that
%
\begin{equation}
\label{EqoP} \bigl\|\operatorname{LMM}_{\mathrm{or}}^{(n)}-
\operatorname{LMM}_{\mathrm{ad}}^{(n)}\bigr\| =\KLEINO_P
\bigl(n_{\mathrm{max}}^{-1/4}\bigr),
\end{equation}
which by Slutsky's lemma implies the CLT with normalisation matrix
$\mathbf{I}_n$. This in turn is already sufficient for obtaining the
result of Corollary~\ref{corclt} for $\operatorname{LMM}_{\mathrm{ad}}^{(n)}$.
Let us start with proving that
\[
T_n^m:= \Biggl\|\sum_{m=0}^{r^{-1}-1}h
\sum_{k=mr/h}^{(m+1)r/h-1}\sum
_{j=1}^\infty \bigl(W_{j}\bigl(\hat\Sigma
^{mr}\bigr)-W_{j}\bigl(\Sigma^{mr}\bigr) \bigr)
Z_{jk} \Biggr\|=\KLEINO_P\bigl(n_{\mathrm{max}}^{-1/4}
\bigr),
\]
where the random variables
\[
Z_{jk}=\operatorname{vec} \bigl(S_{jk}S_{jk}^{\top}-
\pi^2j^2h^{-2}\diag {\bigl(\bigl(H_{n,l}^{kh}
\bigr)^2\bigr)}_{1\le l\le d}-\Sigma^{kh} \bigr)
\]
are independent, $\E_{\mathcal{E}_2}[Z_{jk}]=0$, $\Cov_{\mathcal
{E}_2} (Z_{jk} )=I_{jk}^{-1}\mathcal{Z}$.
We have
%
\begin{equation}
\label{tight1} T_n^m\le\sum_{m=0}^{r^{-1}-1}h
\sum_{j=1}^\infty\bigl\llVert
W_{j}\bigl(\hat\Sigma^{mr}\bigr)-W_{j}\bigl(
\Sigma^{mr}\bigr)\bigr\rrVert \Biggl\|\sum_{k=mr/h}^{(m+1)r/h-1}Z_{jk}
\Biggr\|,
\end{equation}
since the weight matrices do not depend on $k$ on the same block of the
coarse grid.
Using Lemma~\ref{derivative} and that $\|\hat\Sigma-\Sigma\|
_{L^1}=\mathcal{O}_P(\delta_n)$, we obtain
\begin{eqnarray*}
\bigl\llVert W_j\bigl(\hat\Sigma^{mr}
\bigr)-W_j\bigl(\Sigma^{mr}\bigr)\bigr\rrVert &\le&\max
_k \biggl\|\frac{dW_{j}(\Sigma^{kh})}{d\Sigma^{kh}} \biggr\|\bigl \Vert\hat\Sigma^{mr}-
\Sigma^{mr} \bigr\Vert
\\
&=&\mathcal{O}_P \bigl( \bigl(h_0^{-1}
\wedge h_0^3j^{-4} \bigr)r^{-1}\|
\hat\Sigma-\Sigma\|_{L^1([mr,(m+1)r])} \bigr).
\end{eqnarray*}
For the second factor in \eqref{tight1}, we employ $\|\Cov_{\mathcal
{E}_2}(Z_{jk})\|=2\|C_{jk}\|^2$. Consequently, \eqref{Eqdelta_n}
implies for $T_n^m$ the bound
\begin{eqnarray*}
&&\sum_{m=0}^{r^{-1}-1}h\bigl\|\hat
\Sigma^{mr}-\Sigma^{mr}\bigr\|\sum_{j=1}^\infty
\mathcal{O} \bigl(\bigl(h_0^{-1}\wedge
h_0^3j^{-4}\bigr) \bigl(1\vee
j^2h_0^{-2}\bigr) \bigr)
\\
&&\qquad=\|\hat\Sigma-\Sigma\|_{L^1([0,1])} \|\mathcal{O} \bigl(r^{-1/2}h^{1/2}
\bigr)=\mathcal{O}_P \bigl(r^{-1/2}h^{1/2}
\delta_n \bigr)=\KLEINO_P\bigl(n_{\mathrm{max}}
^{-1/4}
\bigr).
\end{eqnarray*}

The asymptotics \eqref{EqoP} follow if we can ensure that the coarse
grid approximations of the weights induce a negligible error, that is,
if also
\[
\sum_{m=0}^{r^{-1}-1}\sum
_{k=mr/h}^{(m+1)r/h-1}h\sum_{j=1}^\infty
\bigl(W_{j}\bigl(\Sigma^{kh}\bigr)-W_{j}\bigl(
\Sigma^{mr}\bigr) \bigr) Z_{jk}=\KLEINO _P
\bigl(n_{\mathrm{max}}^{-1/4}\bigr)
\]
holds. The term is centred and its covariance matrix is bounded in norm by
\[
\sum_{m=0}^{r^{-1}-1}\sum
_{k=mr/h}^{(m+1)r/h-1}h^2\sum
_{j=1}^\infty \bigl\llVert W_{j}\bigl(
\Sigma^{kh}\bigr)-W_{j}\bigl(\Sigma^{mr}\bigr)
\bigr\rrVert ^2 \bigl\| I_{jk}^{-1}\bigr\|.
\]
From Lemma~\ref{derivative}, $\Vert I_{jk}^{-1} \Vert=2\Vert C_{jk}
\Vert^2\lesssim1+j^4h_0^{-4}$ and $\Sigma\in B^\alpha_{1,\infty
}([0,1])$ we derive the upper bound
\[
\mathcal{O} \Biggl(\sum_{k=0}^{h^{-1}-1}h^2
\sum_{j=1}^\infty r^2
h_0^{-2} \bigl(1+j^4h_0^{-4}
\bigr)^{-1} \Biggr) =\mathcal{O} \bigl(n_{\mathrm{min}}^{-1/2}r^{2\alpha}
\bigr)=\KLEINO \bigl(n_{\mathrm{max}}^{-1/2}\bigr)
\]
by the choice of $r$ and $\alpha>1/2$.

Another application of Slutsky's lemma yields the CLT with
normalisation matrix $\hat{\mathbf{I}}_n$ provided $\mathbf
{I}_n^{1/2}\hat{\mathbf{I}}_n^{-1/2}\to E_{d^2}$ in probability. The
proof of Lemma~\ref{derivative}, more specifically the bound on the
last term in \eqref{pr}, yields also
\[
{\biggl\Vert\frac{d}{d\Sigma^{kh}}I_j\bigl(\Sigma^{kh}\bigr)
\biggr\Vert}\lesssim h_0^{-1}\bigl(1+j^4h_0^{-4}
\bigr)^{-1}.
\]
This implies $\sum_{k,j}\Vert\hat I_{jk}-I_{jk} \Vert=\mathcal
{O}_P(h^{-1}\delta_n)$.
Using $\hat A^{-1}-A^{-1}=A^{-1}(\hat A-A)\hat A^{-1}$ and $\Vert
I_k^{-1} \Vert\lesssim h_0^{-1}$, we infer
\[
\bigl\Vert\hat{\mathbf{I}}_n^{-1}-\mathbf{I}_n^{-1}
\bigr\Vert\le\sum_{k=0}^{h^{-1}-1} h^2 {
\Biggl\Vert \Biggl(\sum_{j=1}^\infty\hat
I_{jk} \Biggr)^{-1}- \Biggl(\sum
_{j=1}^\infty I_{jk} \Biggr)^{-1}
\Biggr\Vert}=\mathcal {O}_P \bigl(h\delta_nh_0^{-2}
\bigr).
\]

The smallest eigenvalue of $\mathbf{I}_n^{-1}$ equals $\Vert\mathbf
{I}_n \Vert^{-1}$ which has order at least $n_{\mathrm{max}}^{-1/2}$. The global
Lipschitz constant $L_n$ of $f(x)=x^{1/2}$ for $x\ge\Vert\mathbf
{I}_n \Vert^{-1}$ is therefore of order $n_{\mathrm{max}}^{1/4}$. The perturbation
result from \cite{Kittaneh} for functional calculus therefore implies
\[
\bigl\Vert\mathbf{I}_n^{1/2}\hat{\mathbf{I}}_n^{-1/2}-E_d
\bigr\Vert \le L_n\bigl\Vert\mathbf{I}_n^{1/2} \bigr\Vert\bigl\Vert
\mathbf{I}_n^{-1}-\hat {\mathbf{I}}_n^{-1}
\bigr\Vert =\mathcal{O}_P \bigl(n_{\mathrm{max}}^{1/2}h
\delta_nh_0^{-2} \bigr).
\]
The order is $(n_{\mathrm{max}}/n_{\mathrm{min}})^{1/2}h_0^{-1}\delta_n$ and tends to
zero by \eqref{Eqdelta_n}.
\end{appendix}

\begin{supplement}[id=suppA]
\stitle{Lower bound proofs for estimating the quadratic covariation
matrix from noisy observations}
\slink[doi]{10.1214/14-AOS1224SUPP} 
\sdatatype{.pdf}
\sfilename{aos1224\_supp.pdf}
\sdescription{We provide detailed proofs for Section~\ref{sec:4}.}
\end{supplement}

%
%

%


\printaddresses

\begin{thebibliography}{26}

\bibitem{aitfanxiu2010}
%
\begin{barticle}[mr]
\bauthor{\bsnm{A{\"{\i}}t-Sahalia},~\bfnm{Yacine}\binits{Y.}},
\bauthor{\bsnm{Fan},~\bfnm{Jianqing}\binits{J.}} \AND
\bauthor{\bsnm{Xiu},~\bfnm{Dacheng}\binits{D.}}
(\byear{2010}).
\btitle{High-frequency covariance estimates with noisy and
asynchronous financial data}.
\bjournal{J. Amer. Statist. Assoc.}
\bvolume{105}
\bpages{1504--1517}.
\bid{doi={10.1198/jasa.2010.tm10163}, issn={0162-1459}, mr={2796567}}
\end{barticle}
%
\bptok{imsref}%
\endbibitem

\bibitem{stable}
%
\begin{bmisc}[auto:STB|2014/02/12|14:17:21]
\bauthor{\bsnm{Altmeyer},~\bfnm{R.}\binits{R.}} \AND
\bauthor{\bsnm{Bibinger},~\bfnm{M.}\binits{M.}}
(\byear{2014}).
\bhowpublished{Functional stable limit theorems for efficient spectral
covolatility estimators.
Preprint. Available at \arxivurl{arXiv:1401.2272}.}
\end{bmisc}
%
\bptok{imsref}%
\endbibitem

\bibitem{andbol1997}
%
\begin{barticle}[auto:STB|2014/02/12|14:17:21]
\bauthor{\bsnm{Andersen},~\bfnm{T.}\binits{T.}} \AND
\bauthor{\bsnm{Bollerslev},~\bfnm{T.}\binits{T.}}
(\byear{1997}).
\btitle{Intraday perdiodicity and volatility persistence in financial markets}.
\bjournal{J. Empir. Financ.}
\bvolume{4}
\bpages{115--158}.
\end{barticle}
%
\bptok{imsref}%
\endbibitem

\bibitem{andboldie2010}
%
\begin{bincollection}[auto:STB|2014/02/12|14:17:21]
\bauthor{\bsnm{Andersen},~\bfnm{T.~G.}\binits{T.~G.}},
\bauthor{\bsnm{Bollerslev},~\bfnm{T.}\binits{T.}} \AND
\bauthor{\bsnm{Diebold},~\bfnm{F.~X.}\binits{F.~X.}}
(\byear{2010}).
\btitle{Parametric and nonparametric volatility measurement}.
In \bbooktitle{Handbook of Financial Econometrics}
(\beditor{\binits{Y.}\bfnm{Y.} \bsnm{A\"{{\i}}t-Sahalia}}
\AND
\beditor{\binits{L.~P.}\bfnm{L.~P.} \bsnm{Hansen}}, eds.)
\bpages{67--137}.
\bpublisher{Elsevier},
\blocation{Amsterdam}.
\end{bincollection}
%
\bptok{imsref}%
\endbibitem

\bibitem{bn2011}
%
\begin{barticle}[mr]
\bauthor{\bsnm{Barndorff-Nielsen},~\bfnm{Ole~E.}\binits{O.~E.}},
\bauthor{\bsnm{Hansen},~\bfnm{Peter~Reinhard}\binits{P.~R.}},
\bauthor{\bsnm{Lunde},~\bfnm{Asger}\binits{A.}} \AND
\bauthor{\bsnm{Shephard},~\bfnm{Neil}\binits{N.}}
(\byear{2011}).
\btitle{Multivariate realised kernels: Consistent positive
semi-definite estimators of the covariation of equity prices with noise
and nonsynchronous trading}.
\bjournal{J. Econometrics}
\bvolume{162}
\bpages{149--169}.
\bid{doi={10.1016/j.jeconom.2010.07.009}, issn={0304-4076}, mr={2795610}}
\end{barticle}
%
\bptok{imsref}%
\endbibitem

\bibitem{bns04}
%
\begin{barticle}[mr]
\bauthor{\bsnm{Barndorff-Nielsen},~\bfnm{Ole~E.}\binits{O.~E.}}
\AND
\bauthor{\bsnm{Shephard},~\bfnm{Neil}\binits{N.}}
(\byear{2004}).
\btitle{Econometric analysis of realized covariation: High frequency
based covariance, regression, and correlation in financial economics}.
\bjournal{Econometrica}
\bvolume{72}
\bpages{885--925}.
\bid{doi={10.1111/j.1468-0262.2004.00515.x}, issn={0012-9682}, mr={2051439}}
\end{barticle}
%
\bptok{imsref}%
\endbibitem

\bibitem{supplement}
%
\begin{bmisc}[author]
{\bauthor{\bsnm{Bibinger},~\bfnm{M.}\binits{M.}},
\bauthor{\bsnm{Hautsch},~\bfnm{N.}\binits{N.}},
\bauthor{\bsnm{Malec},~\bfnm{P.}\binits{P.}} \AND
\bauthor{\bsnm{Rei\ss},~\bfnm{M.}\binits{M.}}}
(\byear{2014}).
\bhowpublished{Supplement to ``Estimating the quadratic covariation
matrix from noisy
observations: Local method of moments and efficiency.''
DOI:\doiurl{10.1214/14-AOS1224SUPP}}.
\bptok{imsref}%
\end{bmisc}
%
\endbibitem


\bibitem{bibingerreiss}
%
\begin{barticle}[auto:STB|2014/02/12|14:17:21]
\bauthor{\bsnm{Bibinger},~\bfnm{M.}\binits{M.}} \AND
\bauthor{\bsnm{Rei{\ss}},~\bfnm{M.}\binits{M.}}
(\byear{2014}).
\btitle{Spectral estimation of covolatility from noisy observations
using local weights}.
\bjournal{Scand. J. Stat.}
\bvolume{41}
\bpages{23--50}.
\end{barticle}
%
\bptok{imsref}%
\endbibitem

\bibitem{kinnepoldivet}
%
\begin{barticle}[mr]
\bauthor{\bsnm{Christensen},~\bfnm{Kim}\binits{K.}},
\bauthor{\bsnm{Podolskij},~\bfnm{Mark}\binits{M.}} \AND
\bauthor{\bsnm{Vetter},~\bfnm{Mathias}\binits{M.}}
(\byear{2013}).
\btitle{On covariation estimation for multivariate continuous {I}t\^o
semimartingales with noise in nonsynchronous observation schemes}.
\bjournal{J. Multivariate Anal.}
\bvolume{120}
\bpages{59--84}.
\bid{doi={10.1016/j.jmva.2013.05.002}, issn={0047-259X}, mr={3072718}}
\end{barticle}
%
\bptok{imsref}%
\endbibitem

\bibitem{Ciesielski}
%
\begin{barticle}[mr]
\bauthor{\bsnm{Ciesielski},~\bfnm{Z.}\binits{Z.}},
\bauthor{\bsnm{Kerkyacharian},~\bfnm{G.}\binits{G.}} \AND
\bauthor{\bsnm{Roynette},~\bfnm{B.}\binits{B.}}
(\byear{1993}).
\btitle{Quelques espaces fonctionnels associ\'es \`a des processus gaussiens}.
\bjournal{Studia Math.}
\bvolume{107}
\bpages{171--204}.
\bid{issn={0039-3223}, mr={1244574}}\vadjust{\goodbreak}
\end{barticle}
%
\bptok{imsref}%
\endbibitem

\bibitem{Cohen}
%
\begin{bbook}[mr]
\bauthor{\bsnm{Cohen},~\bfnm{Albert}\binits{A.}}
(\byear{2003}).
\btitle{Numerical Analysis of Wavelet Methods}.
\bseries{Studies in Mathematics and Its Applications}
\bvolume{32}.
\bpublisher{North-Holland},
\blocation{Amsterdam}.
\bid{mr={1990555}}
\end{bbook}
%
\bptok{imsref}%
\endbibitem

\bibitem{fackler}
%
\begin{bmisc}[auto:STB|2014/02/12|14:17:21]
\bauthor{\bsnm{Fackler},~\bfnm{P.~L.}\binits{P.~L.}}
(\byear{2005}).
\bhowpublished{Notes on matrix calculus.
Lecture notes, North Carolina State Univ.
Available at \url{http://www4.ncsu.edu/\textasciitilde pfackler/MatCalc.pdf}}.
\end{bmisc}
%
\bptok{imsref}%
\endbibitem

\bibitem{Hansen}
%
\begin{barticle}[mr]
\bauthor{\bsnm{Hansen},~\bfnm{Lars~Peter}\binits{L.~P.}}
(\byear{1982}).
\btitle{Large sample properties of generalized method of moments estimators}.
\bjournal{Econometrica}
\bvolume{50}
\bpages{1029--1054}.
\bid{doi={10.2307/1912775}, issn={0012-9682}, mr={0666123}}
\end{barticle}
%
\bptok{imsref}%
\endbibitem

\bibitem{hy}
%
\begin{barticle}[mr]
\bauthor{\bsnm{Hayashi},~\bfnm{Takaki}\binits{T.}} \AND
\bauthor{\bsnm{Yoshida},~\bfnm{Nakahiro}\binits{N.}}
(\byear{2011}).
\btitle{Nonsynchronous covariation process and limit theorems}.
\bjournal{Stochastic Process. Appl.}
\bvolume{121}
\bpages{2416--2454}.
\bid{doi={10.1016/j.spa.2010.12.005}, issn={0304-4149}, mr={2822782}}
\end{barticle}
%
\bptok{imsref}%
\endbibitem

\bibitem{jacodrosenbaum}
%
\begin{barticle}[mr]
\bauthor{\bsnm{Jacod},~\bfnm{Jean}\binits{J.}} \AND
\bauthor{\bsnm{Rosenbaum},~\bfnm{Mathieu}\binits{M.}}
(\byear{2013}).
\btitle{Quarticity and other functionals of volatility: Efficient estimation}.
\bjournal{Ann. Statist.}
\bvolume{41}
\bpages{1462--1484}.
\bid{doi={10.1214/13-AOS1115}, issn={0090-5364}, mr={3113818}}
\end{barticle}
%
\bptok{imsref}%
\endbibitem

\bibitem{kallenberg}
%
\begin{bbook}[mr]
\bauthor{\bsnm{Kallenberg},~\bfnm{Olav}\binits{O.}}
(\byear{2002}).
\btitle{Foundations of Modern Probability},
\bedition{2nd} ed.
\bseries{Probability and Its Applications (New York)}.
\bpublisher{Springer},
\blocation{New York}.
\bid{doi={10.1007/978-1-4757-4015-8}, mr={1876169}}
\end{bbook}
%
\bptok{imsref}%
\endbibitem

\bibitem{Kittaneh}
%
\begin{barticle}[mr]
\bauthor{\bsnm{Kittaneh},~\bfnm{Fuad}\binits{F.}}
(\byear{1985}).
\btitle{On {L}ipschitz functions of normal operators}.
\bjournal{Proc. Amer. Math. Soc.}
\bvolume{94}
\bpages{416--418}.
\bid{doi={10.2307/2045225}, issn={0002-9939}, mr={0787884}}
\end{barticle}
%
\bptok{imsref}%
\endbibitem

\bibitem{LehmannCasella}
%
\begin{bbook}[mr]
\bauthor{\bsnm{Lehmann},~\bfnm{E.~L.}\binits{E.~L.}} \AND
\bauthor{\bsnm{Casella},~\bfnm{George}\binits{G.}}
(\byear{1998}).
\btitle{Theory of Point Estimation},
\bedition{2nd} ed.
\bpublisher{Springer},
\blocation{New York}.
\bid{mr={1639875}}
\end{bbook}
%
\bptok{imsref}%
\endbibitem

\bibitem{lecamyang}
%
\begin{bbook}[mr]
\bauthor{\bsnm{Le Cam},~\bfnm{Lucien}\binits{L.}} \AND
\bauthor{\bsnm{Yang},~\bfnm{Grace~Lo}\binits{G.~L.}}
(\byear{2000}).
\btitle{Asymptotics in Statistics: Some Basic Concepts},
\bedition{2nd} ed.
\bpublisher{Springer},
\blocation{New York}.
\bid{doi={10.1007/978-1-4612-1166-2}, mr={1784901}}
\end{bbook}
%
\bptok{imsref}%
\endbibitem

\bibitem{Lietal}
%
\begin{barticle}[mr]
\bauthor{\bsnm{Li},~\bfnm{Yingying}\binits{Y.}},
\bauthor{\bsnm{Mykland},~\bfnm{Per~A.}\binits{P.~A.}},
\bauthor{\bsnm{Renault},~\bfnm{Eric}\binits{E.}},
\bauthor{\bsnm{Zhang},~\bfnm{Lan}\binits{L.}} \AND
\bauthor{\bsnm{Zheng},~\bfnm{Xinghua}\binits{X.}}
(\byear{2014}).
\btitle{Realized volatility when sampling times are possibly endogenous}.
\bjournal{Econometric Theory}
\bvolume{30}
\bpages{580--605}.
\bid{doi={10.1017/S0266466613000418}, issn={0266-4666}, mr={3205607}}
\end{barticle}
%
\bptok{imsref}%
\endbibitem

\bibitem{qmle}
%
\begin{barticle}[mr]
\bauthor{\bsnm{Liu},~\bfnm{Cheng}\binits{C.}} \AND
\bauthor{\bsnm{Tang},~\bfnm{Cheng~Yong}\binits{C.~Y.}}
(\byear{2014}).
\btitle{A quasi-maximum likelihood approach for integrated covariance
matrix estimation with high frequency data}.
\bjournal{J. Econometrics}
\bvolume{180}
\bpages{217--232}.
\bid{doi={10.1016/j.jeconom.2014.01.008}, issn={0304-4076}, mr={3197794}}
\end{barticle}
%
\bptok{imsref}%
\endbibitem

\bibitem{reiss}
%
\begin{barticle}[mr]
\bauthor{\bsnm{Rei{\ss}},~\bfnm{Markus}\binits{M.}}
(\byear{2011}).
\btitle{Asymptotic equivalence for inference on the volatility from
noisy observations}.
\bjournal{Ann. Statist.}
\bvolume{39}
\bpages{772--802}.
\bid{doi={10.1214/10-AOS855}, issn={0090-5364}, mr={2816338}}
\end{barticle}
%
\bptok{imsref}%
\endbibitem

\bibitem{xiu}
%
\begin{bmisc}[auto:STB|2014/02/12|14:17:21]
\bauthor{\bsnm{Shephard},~\bfnm{N.}\binits{N.}} \AND
\bauthor{\bsnm{Xiu},~\bfnm{D.}\binits{D.}}
(\byear{2012}).
\bhowpublished{Econometric analysis of multivariate realised QML:
Efficient positive semi-definite estimators of the covariation of equity
prices. Preprint.}
\end{bmisc}
%
\bptok{imsref}%
\endbibitem

\bibitem{zhang11}
%
\begin{barticle}[mr]
\bauthor{\bsnm{Zhang},~\bfnm{Lan}\binits{L.}}
(\byear{2011}).
\btitle{Estimating covariation: {E}pps effect, microstructure noise}.
\bjournal{J.~Econometrics}
\bvolume{160}
\bpages{33--47}.
\bid{doi={10.1016/j.jeconom.2010.03.012}, issn={0304-4076}, mr={2745865}}
\end{barticle}
%
\bptok{imsref}%
\endbibitem\

\bibitem{zhangmykland}
%
\begin{barticle}[mr]
\bauthor{\bsnm{Zhang},~\bfnm{Lan}\binits{L.}},
\bauthor{\bsnm{Mykland},~\bfnm{Per~A.}\binits{P.~A.}} \AND
\bauthor{\bsnm{A{\"{\i}}t-Sahalia},~\bfnm{Yacine}\binits{Y.}}
(\byear{2005}).
\btitle{A tale of two time scales: Determining integrated volatility
with noisy high-frequency data}.
\bjournal{J. Amer. Statist. Assoc.}
\bvolume{100}
\bpages{1394--1411}.
\bid{doi={10.1198/016214505000000169}, issn={0162-1459}, mr={2236450}}
\end{barticle}
%
\bptok{imsref}%
\endbibitem
\end{thebibliography}
\end{document}